\newtheorem{theorem}{Theorem}[section]
\newtheorem{lemma}[theorem]{Lemma}
\newtheorem{proposition}[theorem]{Proposition}
\theoremstyle{remake}
\newtheorem{remark}{Remark}
\begin{document}

\title[Universal Blowup Rate of Rotational NLS]{Universal Upper Bound on The Blowup Rate of Nonlinear Schr\"odinger Equation with Rotation}

\author{Yi Hu, Christopher Leonard and Shijun Zheng}

\address[Yi Hu]{Department of Mathematical Sciences, Georgia Southern University,
Statesboro, GA 30460}
 \email{yihu@GeorgiaSouthern.edu}
\address[Christopher Leonard]{Department of Mathematics, North Carolina State University,
Raleigh, NC 27695}
\email{cleonar@ncsu.edu}
\address[Shijun Zheng]{Department of Mathematical Sciences, Georgia Southern University,
Statesboro, GA 30460}
 \email{szheng@GeorgiaSouthern.edu}

\maketitle

\begin{abstract}
In this paper, we prove a universal upper bound on the blowup rate of a focusing nonlinear Schr\"odinger equation
with  an angular momentum under a trapping harmonic potential,
assuming that the initial data  is radially symmetric in the weighted Sobolev space.
The nonlinearity is in the mass supercritical and energy subcritical regime.
Numerical simulations are also presented.
\end{abstract}

\section{Introduction}\label{sec:1} 
 Consider the focusing nonlinear Schr\"odinger equation (NLS)\index{nonlinear Schr\"odinger equation} with an angular momentum term in $\mathbb{R}^{1+n}$:
	\begin{align}\label{eq:nls_va}
	\begin{cases}
	iu_t=-\Delta u+Vu-\lambda|u|^{p-1}u+L_Au \\
	u(0,x)=u_0\in\mathscr{H}^1.
	\end{cases}
	\end{align}
Here $u=u(t,x):\mathbb{R}\times\mathbb{R}^n\to\mathbb{C}$  denotes the wave function,
$V(x):=\gamma^2|x|^2$\, ($\gamma>0$) is a trapping harmonic potential\index{harmonic potential} that confines the movement of particles,
and $\lambda$ is a positive constant indicating the self-interaction between particles is attractive. 
The nonlinearity has the exponent $1\le p<2^*-1$, where by convention $2^*:=\frac{2n}{n-2}$ if $n\ge 3$;  $\infty$ if $n=1,2$. 
The operator $L_Au:=iA\cdot\nabla u$ is the angular momentum term,\index{angular momentum}
where $A=Mx$ with $M=(M_{j,k})_{1\leq j,k\leq n}$ being an $n\times n$ real-valued skew-symmetric matrix,
i.e., $M=-M^T$.
It  generates a rotation in $\mathbb{R}^n$
in the sense that $e^{-itL_A}f(x)=f(e^{tM}x)$ for $(t,x)\in\mathbb{R}\times\mathbb{R}^n$.
The space $\mathscr{H}^1=\mathscr{H}^{1,2}$ denotes the weighted Sobolev space\index{Sobolev space} 
	\begin{align*}
	\mathscr{H}^{1,r}(\mathbb{R}^n)
	:=\left\{f\in L^r(\mathbb{R}^n):\nabla f, \ xf \in L^r(\mathbb{R}^n)\right\}
	\end{align*}
for $r\in(1,\infty)$,
and the endowed norm is given by
	\begin{align*}
	\Vert f\Vert_{\mathscr{H}^{1,r}}
	=\Vert\nabla f\Vert_r+\left\Vert xf\right\Vert_r+\Vert f\Vert_r\, ,
	\end{align*}
where $\lVert\cdot\rVert_r:=\lVert\cdot\rVert_{L^r}$ is the usual $L^r$-norm.
The linear Hamiltonian $H_{A,V}:=-\Delta+V+iA\cdot\nabla$
is essentially self-adjoint in $L^2$, 
whose eigenvalues are associated to the Landau levels as quantum numbers.

When $n=3$, equation \eqref{eq:nls_va} is also known as Gross-Pitaevskii equation, 
which models rotating Bose-Einstein condensation (BEC)\index{BEC} with attractive particle interactions
in a dilute gaseous ultra-cold superfluid.
The operator $L_A$ is usually denoted by $-\Omega\cdot L$,
where $\Omega=(\Omega_1,\Omega_2,\Omega_3)\in\mathbb{R}^3$ is a given angular velocity vector
and $L=-ix\wedge\nabla$.
In this case the skew-symmetric matrix $M$ is equal to
$\begin{pmatrix} 0 & -\Omega_3 & \Omega_2 \\ 
\Omega_3 & 0 & -\Omega_1 \\ 
-\Omega_2 & \Omega_1 & 0 \end{pmatrix}$.

Such  system as given in \eqref{eq:nls_va},  the rotational nonlinear Schr\"odinger equation\index{nonlinear Schr\"odinger equation} (RNLS) describing rotating particles in a harmonic trap has acquired significance in connection with optics, plasma, quantized vortices, superfluids, spinor BEC in theoretical and experimental physics
\cite{Af06,BaoCai15, BaoWMar05,  Gross61, MAHHWC99, ReZaStri01}.
Meanwhile, 
  mathematical study of the solutions to equation (1) have been conducted  in order to 
provide insight and rigorous understanding for  the  dynamical behaviors of such  wave-matter. 
For $\lambda\in \mathbb{R}$ and $1\leq p<1+\frac{4}{n-2}$, 
the local well-posedness results of equation (\ref{eq:nls_va}) were obtained in e.g.
\cite{AnMaSpar, BaHaHuZheng,HaoHsiaoLi1}, 
see also \cite{CazE88,De91,GaZ13a, Zheng} for the treatment in a general magnetic setting.
In the focusing case $\lambda>0$ and $p\ge 1+\frac{4}{n}$,  there exist  
solutions that blowup in finite time \cite{BaHuZheng,Car02a,We83, Zh00,Zh05}.  

Let $Q\in H^1$ be the unique positive, non-increasing and radial ground state\index{ground state} 
solution of the elliptic equation 
 	\begin{align}\label{eq:ground_state_solution} 
	-\Delta Q+Q-Q^p=0\,,
	\end{align} 
where $H^1$ denotes the usual Sobolev space.\index{Sobolev space}  
In the mass-critical case $p=1+\frac{4}{n}$, the paper \cite{BaHaHuZheng} showed that 
 $\Vert Q\Vert_2$ serves as the sharp threshold for blowup and global existence for equation \eqref{eq:nls_va}. 
Moreover, if $p=1+\frac{4}{n}$ and
$\Vert u_0\Vert_2$ is slightly greater than $\Vert Q\Vert_2$,  the paper
\cite{BaHuZheng} obtained  the exact blowup rate\index{blowup rate}  
$ \Vert \nabla u(t) \Vert_2 =(2 \pi)^{-1/2}{\Vert \nabla Q \Vert_{2}} \sqrt{\frac{\log \left| \log (T - t)  \right| }{T - t} }$ 
as $t\to T=T_{max}$.
The analogous results for the standard NLS 
were initially proven in \cite{We83} and \cite{MerRa05b}, where $A=V=0$.  
In \cite{BaHuZheng} we apply the so-called $\mathcal{R}$-transform method, which is a composite of the lens transform and a time-dependent rotation   that allows to convert \eqref{eq:nls_va} into the standard NLS.   
We would like to mention that the case  $A=0$, $V=\gamma^2 |x|^2$ was considered in \cite{Zh00,ZhuZh10}.
Also if the harmonic potential\index{harmonic potential} is repulsive, 
i.e., $V=-\gamma^2|x|^2$, 
there are similar blowup results for equation \eqref{eq:nls_va} without angular momentum,  see e.g. \cite{ZhuLi}.

The purpose of this article is to give a space-time universal upper bound 
on the blowup rate\index{blowup rate}  for the blowup solution to equation \eqref{eq:nls_va} with radial data in the mass-supercritical regime
 $p\in( 1+\frac{4}{n},1+\frac{4}{n-2})$.  Our main result is stated as follows.

\begin{theorem}\label{thm:upper_bound}
Let $n\geq 3$ and $1+\frac{4}{n}<p<1+\frac{4}{n-2}$,
or $n=2$ and $3<p<5$.
Let $u_0\in\mathscr{H}^1$ be radially symmetric,
and assume that the corresponding solution $u\in C([0,T),\mathscr{H}^1)$ blows up in finite time $T$.
Then
	\begin{align}\label{univ-bluprate}
	\int_t^T (T-s) \Vert\nabla u(s)\Vert_{2}^2 ds
	\leq C(T-t)^\frac{2(5-p)}{5-p+(n-1)(p-1)}.
	\end{align}
\end{theorem}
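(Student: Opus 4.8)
The plan is to first strip off the rotation by a time-dependent change of frame, and then to run a localized virial argument tuned to the radial Sobolev (Strauss) embedding.

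\textbf{Step 1: removing the rotation and recovering radial symmetry.} Set $v(t,x):=u(t,e^{-tM}x)$. Since $M$ is skew-symmetric, $e^{-tM}\in SO(n)$ is a rotation, so $|e^{-tM}x|=|x|$ and this change of variables preserves the $L^2$ norms of $u$, $\nabla u$ and $xu$. A direct computation shows that the convective term generated by $\partial_t(e^{-tM}x)=-Me^{-tM}x$ exactly cancels $L_Au=iMy\cdot\nabla u$ evaluated at $y=e^{-tM}x$, while $-\Delta$, $V=\gamma^2|x|^2$ and the nonlinearity are invariant under the rotation. Hence $v$ solves the rotation-free equation
\[
iv_t=-\Delta v+\gamma^2|x|^2 v-\lambda|v|^{p-1}v,\qquad v(0)=u_0 .
\]
This equation commutes with the full orthogonal group, so by uniqueness in $C([0,T),\mathscr{H}^1)$ the solution $v(t,\cdot)$ is radial for every $t$; consequently $u(t,x)=v(t,|x|)$ is radial as well, $\|\nabla u(t)\|_2=\|\nabla v(t)\|_2$, and $v$ blows up at the same time $T$. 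Thus it suffices to prove the estimate for $v$. I emphasize that this reduction is not cosmetic: it is what \emph{guarantees} radial symmetry of $u$, which is indispensable for the Strauss inequality below when $n\ge 3$.

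\textbf{Step 2: localized virial and the radial estimate.} Fix a smooth radial weight $\chi$ with $\chi(r)=r^2$ for $r\le 1$, $\chi'\ge 0$, $\chi''\le 2$ and $\chi'(r)/r\le 2$, and bounded; put $\chi_R(x)=R^2\chi(|x|/R)$ and $I_R(t)=\int\chi_R|v|^2\,dx\ge 0$. The localized virial identity for the harmonic NLS expresses $I_R''$ as the Hessian term $4\,\mathrm{Re}\sum_{j,k}\int\partial_j\partial_k\chi_R\,\partial_jv\,\partial_k\bar v$, a bi-Laplacian term $-\int\Delta^2\chi_R\,|v|^2$, the nonlinear term $-\tfrac{2\lambda(p-1)}{p+1}\int\Delta\chi_R\,|v|^{p+1}$, and a trapping-potential term $-4\gamma^2\int(\nabla\chi_R\cdot x)|v|^2$. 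The design of $\chi$ makes $D^2\chi_R\le 2\,\mathrm{Id}$ and $\nabla\chi_R\cdot x\ge 0$, so both the Hessian excess over $8\|\nabla v\|_2^2$ and the potential term are nonpositive and may be discarded. Using the conserved energy to substitute $\tfrac{\lambda}{p+1}\|v\|_{p+1}^{p+1}=\tfrac12\|\nabla v\|_2^2+\tfrac{\gamma^2}{2}\int|x|^2|v|^2-E$ and discarding the (nonpositive) potential energy, I obtain, with $\delta_0:=n(p-1)-4>0$,
\[
I_R''(t)\le -2\delta_0\|\nabla v(t)\|_2^2+4n(p-1)E+CR^{-2}+C\!\int_{|x|>R}\!|v|^{p+1}\,dx .
\]
The Strauss inequality $\|v\|_{L^\infty(|x|>R)}\le CR^{-(n-1)/2}\|v\|_2^{1/2}\|\nabla v\|_2^{1/2}$ together with mass conservation gives $\int_{|x|>R}|v|^{p+1}\le CR^{-(n-1)(p-1)/2}\|\nabla v\|_2^{(p-1)/2}$. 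Since $p<5$ forces $(p-1)/2<2$, Young's inequality absorbs a factor $\delta_0\|\nabla v\|_2^2$ at the price of a term $CR^{-\kappa}$ with $\kappa=\frac{2(n-1)(p-1)}{5-p}$, yielding the clean bound $I_R''(t)\le -\delta_0\|\nabla v(t)\|_2^2+C_0+CR^{-2}+CR^{-\kappa}$.

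\textbf{Step 3: double integration and the choice of $R$.} Multiplying by $(T-s)$ and integrating on $[t,T)$, and using the Taylor-with-remainder identity $\int_t^T(T-s)I_R''\,ds=I_R(T)-I_R(t)-(T-t)I_R'(t)$ together with $I_R(T)\ge 0$, $0\le I_R(t)\le CR^2$ and $|I_R'(t)|\le CR\|\nabla v(t)\|_2$, I get
\[
\delta_0\!\int_t^T\!(T-s)\|\nabla v(s)\|_2^2\,ds\le CR^2+CR(T-t)\|\nabla v(t)\|_2+C\big(C_0+R^{-2}+R^{-\kappa}\big)(T-t)^2 .
\]
Choosing $R=R(t)=(T-t)^{\alpha/2}$ with $\alpha=\frac{2(5-p)}{5-p+(n-1)(p-1)}$ balances $R^2\sim R^{-\kappa}(T-t)^2\sim(T-t)^{\alpha}$ — this is precisely where the exponent in \eqref{univ-bluprate} is produced — while $C_0(T-t)^2$ and $R^{-2}(T-t)^2=(T-t)^{2-\alpha}$ are of strictly higher order, since $\alpha<1$ holds throughout the stated supercritical range.

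\textbf{Main obstacle and closing.} The genuinely delicate term is the endpoint contribution $CR(T-t)\|\nabla v(t)\|_2$, which for the optimal $R$ is of the \emph{same} order $(T-t)^{\alpha}$ as the quantity being bounded, so it cannot be relegated to a remainder. Writing $F(t):=\int_t^T(T-s)\|\nabla v(s)\|_2^2\,ds$, one has $-F'(t)=(T-t)\|\nabla v(t)\|_2^2$, so this term equals $CR(T-t)^{1/2}\sqrt{-F'(t)}$ and thus depends only \emph{sublinearly} on $F'$. I therefore expect to close the estimate by a continuity/bootstrap argument on an interval $[t_0,T)$: assuming $F(t)\le C_*(T-t)^{\alpha}$, the heuristic $-F'\sim\alpha F/(T-t)$ converts the endpoint term into a contribution $\lesssim\sqrt{C_*}\,(T-t)^{\alpha}$, and the resulting inequality $\delta_0 C_*\le C+C\sqrt{C_*}$ bounds $C_*$ and improves the constant, closing the bootstrap. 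Transferring back via $\|\nabla u(t)\|_2=\|\nabla v(t)\|_2$ then gives \eqref{univ-bluprate}. The two points requiring the most care are (i) the precise form of the Young exponent yielding $\kappa$, and (ii) making the sublinear absorption of the endpoint term rigorous; the favorable signs of the trapping potential and of the Hessian correction, both discarded in Step 2, are exactly what reduce the problem to the clean differential inequality that makes this possible.
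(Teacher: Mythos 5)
Your Steps 1 and 2, and the master inequality at the start of Step 3, are correct and essentially reproduce the paper's argument. The rotating-frame substitution $v(t,x)=u(t,e^{-tM}x)$ is a legitimate alternative to the paper's route (the paper keeps the rotation, proves the localized virial identity directly for $A=Mx$, and then uses that $L_Au=0$ and $\ell_A[u]=0$ for radial $u$); your monotone cutoff with $\chi'\ge 0$, which makes the trapping term $-4\gamma^2\int(\nabla\chi_R\cdot x)|v|^2$ nonpositive outright, is a minor simplification of the paper's compactly supported $\psi$, for which that term is instead $O(R^2)$. The Strauss estimate, the Young exponent $\kappa=\frac{2(n-1)(p-1)}{5-p}$, the endpoint bounds $I_R(t)\le CR^2$ and $|I_R'(t)|\le CR\Vert\nabla v(t)\Vert_2$, and the balancing choice $R=(T-t)^{\alpha/2}$ with $\alpha=\frac{2(5-p)}{5-p+(n-1)(p-1)}$ all match the paper.

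The genuine gap is in your closing step. Your bootstrap rests on the pointwise heuristic $-F'(t)\sim \alpha F(t)/(T-t)$, i.e.\ that the bootstrap hypothesis $F(t)\le C_*(T-t)^{\alpha}$ yields a pointwise bound on $(T-t)\Vert\nabla v(t)\Vert_2^2$. This implication is false: an upper bound on $F$ gives no upper bound on $-F'$, because $\Vert\nabla v(t)\Vert_2$ may spike on short time intervals without increasing $F$ appreciably. This is not a technicality -- the virial method only controls time averages of $\Vert\nabla v\Vert_2^2$, and the theorem itself only implies $\liminf_{t\to T}(T-t)^{\delta}\Vert\nabla u(t)\Vert_2<\infty$ rather than a pointwise bound, as the paper's own remark notes; so the inequality $\delta_0 C_*\le C+C\sqrt{C_*}$ cannot be derived at all times, and the continuity argument does not close. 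The correct closing (the paper's) needs no bootstrap: apply Young's inequality in the specific pairing $R\cdot\bigl[(T-t)\Vert\nabla v(t)\Vert_2\bigr]\le CR^2+(T-t)^2\Vert\nabla v(t)\Vert_2^2$ and use the exact identity $(T-t)^2\Vert\nabla v(t)\Vert_2^2=-(T-t)F'(t)$, which turns your master inequality, after inserting $R=(T-t)^{\alpha/2}$, into $F(t)+(T-t)F'(t)\le C(T-t)^{\alpha}$, i.e.\ $\frac{d}{dt}\bigl(\frac{F(t)}{T-t}\bigr)\le C(T-t)^{\alpha-2}$; integrating over $[0,t]$ and using $\alpha<1$ gives $F(t)\le C(T-t)^{\alpha}$. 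Note also that the Young pairing matters: your alternative split $(T-t)^{(1+\alpha)/2}\sqrt{-F'}\le C_\epsilon(T-t)^{1+\alpha}+\epsilon(-F')$ produces $F+cF'\le C(T-t)^{\alpha}$ with a \emph{constant} coefficient $c$, whose integration only yields boundedness of $F$, not the decay $(T-t)^{\alpha}$; the variable coefficient $(T-t)$ in front of $F'$ is essential.
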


Theorem \ref{thm:upper_bound} is motivated by a similar result by Merle, Rapha\"el and Szeftel \cite{MerRaSzef}, 
where they proved such an upper bound on the blowup rate for the standard NLS without potential or angular momentum. 
The proof of Theorem \ref{thm:upper_bound} mainly follows  the idea in \cite{MerRaSzef} 
but  relies on a refined version of the localized virial identity\index{virial identity} 
(Lemma \ref{l:local-virial_gamma}, Section 3) in the magnetic setting. Note that 
 the $\mathcal{R}$-transform introduced in \cite{BaHuZheng} does not apply here for $p> 1+\frac{4}{n}$.
In Section \ref{section:proof_of_the_main_theorem} we shall give the proof of the main theorem. 
In Section \ref{num-figures}, we include numerical figures to show the threshold of blowup for various cases of interest.

\section{Preliminaries}

In this section we recall the local well-posedness  theory for equation \eqref{eq:nls_va} 
 and a radial version of Gagliardo-Nirenberg  inequality  that we shall apply in the proof of our main theorem.

\subsection{Local well-posedness of RNLS for $p\in [1, 1+4/(n-2) )$}
For $u_0\in\mathscr{H}^1$, 
the local well-posedness of equation \eqref{eq:nls_va} was obtained as a special case in e.g, 
\cite{De91,Zheng}
and \cite{BaHuZheng}. 
The papers \cite{De91,Zheng} dealt with a general class of magnetic potentials and  electric potentials where $A$ is  sublinear and $V$ is subquadratic and essentially of positive sign.   
The case where  $V$ is subquadratic of both signs, e.g., $V=\pm \sum_{j=1}^n \gamma_j^2 x_j^2$, 
$\gamma_j>0$ were considered in \cite{AnMaSpar}
when $n=2,3$, and  \cite{BaHuZheng,Car11time} in higher dimensions. 

Let $H_{A,V}=-\Delta+V+i A\cdot \nabla=-(\nabla-\frac{i}{2}A)^2+V_e$, where $V_e(x)=V(x)-\frac{|A|^2}{4}$ and $\mathrm{div}\, A=0$.
The proof for the local result relies on local in time dispersive estimates for the propagator 
$U(t)=e^{-itH_{A,V}}$  
constructed in \cite{Ya}. 
Alternatively, for $V=\pm \gamma^2 |x|^2$, this can also be done by means of
	\begin{align}\label{U(t)OmV}
	e^{-itH_{A,V}}(x,y)
	=\left(\frac{\gamma}{2\pi i\sin(2\gamma t)}\right)^\frac{n}{2}
	e^{i\frac{\gamma}{2}(|x|^2+|y|^2)\cot(2\gamma t)}
	e^{-i\gamma\frac{(e^{tM}x)\cdot y}{\sin(2\gamma t)}},
	\end{align}
the fundamental solution to $iu_t=H_{A,V}u$ 
  if $V(x)= \gamma^2 |x|^2$;
and replacing $\gamma\to i\gamma$ if   $V(x)= -\gamma^2 |x|^2$.
The above formula (\ref{U(t)OmV}) can be obtained via the $\mathcal{R}$-transform, a type of pseudo-conformal transform in the rotational setting, see \cite{BaHuZheng}.

\begin{proposition}\label{prop:wellposedness}
For equation \eqref{eq:nls_va},
we have the following known results on well-posedness and conservation laws.
Let $r:=p+1$ and $q:=\frac{4(p+1)}{n(p-1)}$\ .
	\begin{enumerate}
	\item[(a)] Well-posedness and blowup alternative:
		\begin{enumerate}
		\item[(i)] If $1\leq p<1+\frac{4}{n}$,
		then equation \eqref{eq:nls_va} has an $\mathscr{H}^1$-bounded global solution
		$u\in C(\mathbb{R};\mathscr{H}^1)\cap L^q_{\rm loc}(\mathbb{R};\mathscr{H}^{1,r})$.
	
		\item[(ii)] If $1+\frac{4}{n}\leq p<1+\frac{4}{n-2}$,
		then there exists $T=T_{max}>0$ such that equation \eqref{eq:nls_va} has a unique maximal solution
		$u\in C([0,T),\mathscr{H}^1)\cap L^q_{\rm loc}([0,T),\mathscr{H}^{1,r})$.
		If $T<\infty$,
		then $u$ blows up at $T$ with a lower bound
			\begin{align}\label{lowbd:T-t}
			\Vert\nabla u(t) \Vert_2
			\geq C(T-t)^{-(\frac{1}{p-1}-\frac{n-2}{4})}.
			\end{align}
		\end{enumerate}
	\item[(b)] The followings are conserved on the maximal lifespan $[0,T)$:
		\begin{enumerate}
		\item[(i)] Mass:
		$\displaystyle M[u]=\int|u|^2$
		\item[(ii)] Energy:
		$\displaystyle E[u]=\int\left(|\nabla u|^2+V|u|^2-\frac{2\lambda}{p+1}|u|^{p+1}+\overline{u}L_Au\right)$
		\item[(iii)] Angular momentum:\index{angular momentum}
		$\displaystyle \ell_A[u]=\int\overline{u}L_Au$.
		\end{enumerate}
	\end{enumerate}
\end{proposition}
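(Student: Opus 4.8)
The plan is to establish Proposition \ref{prop:wellposedness} in three stages: a local Cauchy theory for \eqref{eq:nls_va} in $\mathscr{H}^1$, the blowup alternative together with its quantitative lower bound, and finally the three conservation laws.

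\emph{Stage 1 (local well-posedness).} First I would extract local-in-time dispersive and Strichartz estimates for the propagator $U(t)=e^{-itH_{A,V}}$. Reading off the kernel \eqref{U(t)OmV}, one has $\|U(t)f\|_\infty\le C|\sin(2\gamma t)|^{-n/2}\|f\|_1$, so $\|U(t)f\|_\infty\lesssim|t|^{-n/2}\|f\|_1$ on a fixed interval $|t|\le t_0<\frac{\pi}{2\gamma}$ where the kernel does not degenerate. Interpolating with the $L^2$-isometry and applying the standard $TT^*$ argument yields Strichartz estimates for admissible pairs on $[0,t_0]$. With $r=p+1$ and $q=\frac{4(p+1)}{n(p-1)}$, the pair $(q,r)$ is admissible since $\frac{2}{q}=n(\frac12-\frac1r)$, which is precisely why these exponents appear in the statement. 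One then runs a contraction mapping argument for the Duhamel map $u\mapsto U(t)u_0+i\lambda\int_0^t U(t-s)(|u|^{p-1}u)(s)\,ds$ in $C([0,\tau];\mathscr{H}^1)\cap L^q([0,\tau];\mathscr{H}^{1,r})$ for $\tau$ small. The delicate point is propagating the weighted part of the $\mathscr{H}^{1,r}$ norm: one commutes $\nabla$ and multiplication by $x$ through $U(t)$ using the vector-field operators adapted to the harmonic oscillator (equivalently, transporting the constant-coefficient estimates via the $\mathcal{R}$-transform of \cite{BaHuZheng}), and controls their action on the nonlinearity by the fractional chain rule, which is admissible since $p<1+\frac{4}{n-2}$ keeps the problem energy-subcritical. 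This is the scheme of \cite{De91,Zheng,BaHuZheng}, which I would follow.

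\emph{Stage 2 (blowup alternative and lower bound).} The contraction produces a unique maximal solution with the alternative: either $T=\infty$, or $\|u(t)\|_{\mathscr{H}^1}\to\infty$ as $t\to T$. For part (a)(i), when $1\le p<1+\frac4n$ the Gagliardo--Nirenberg inequality bounds $\|u\|_{p+1}^{p+1}$ by a sub-quadratic power of $\|\nabla u\|_2$; inserting this, together with $|\ell_A[u]|\le C\|xu\|_2\|\nabla u\|_2$, into the conserved energy $E[u]=\|\nabla u\|_2^2+\gamma^2\|xu\|_2^2-\frac{2\lambda}{p+1}\|u\|_{p+1}^{p+1}+\ell_A[u]$ and absorbing via Young's inequality yields a uniform a priori bound on $\|\nabla u\|_2^2+\|xu\|_2^2$; with conserved mass this bounds $\|u\|_{\mathscr{H}^1}$, so the alternative forces the solution to be global and $\mathscr{H}^1$-bounded. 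For the lower bound \eqref{lowbd:T-t}, I would show that the local existence time of Stage 1, starting from time $t$, satisfies $\tau\gtrsim\|u(t)\|_{\mathscr{H}^1}^{-\sigma}$ with $\sigma=(\frac{1}{p-1}-\frac{n-2}{4})^{-1}$; since conserved mass and energy control $\|xu\|_2$ by $\|\nabla u\|_2$ as above, this upgrades to $\tau\gtrsim\|\nabla u(t)\|_2^{-\sigma}$. As the solution cannot be continued past $T$, one needs $\tau\le T-t$, which rearranges to $\|\nabla u(t)\|_2\ge C(T-t)^{-1/\sigma}=C(T-t)^{-(\frac{1}{p-1}-\frac{n-2}{4})}$.

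\emph{Stage 3 (conservation laws).} These I would verify formally and then justify by approximation. Writing $H:=H_{A,V}$ and $N:=|u|^{p-1}u$, differentiate each functional and substitute $u_t=-i(Hu-\lambda N)$. For the mass, $\frac{d}{dt}\|u\|_2^2=2\,\mathrm{Re}\langle u,u_t\rangle=0$ since $H$ is self-adjoint (so $\langle u,Hu\rangle\in\mathbb{R}$) and $\langle u,N\rangle=\int|u|^{p+1}\in\mathbb{R}$. For the energy, the linear contribution cancels by self-adjointness while the two nonlinear terms cancel once $\frac{d}{dt}\int|u|^{p+1}=(p+1)\,\mathrm{Im}\langle N,Hu\rangle$ is used. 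The angular momentum is the most structural: $\frac{d}{dt}\ell_A[u]$ splits into a linear piece that vanishes because $[L_A,H]=0$ --- which holds since $L_A$ commutes with $-\Delta$ by rotational invariance and with $V=\gamma^2|x|^2$ because $x^{T}Mx=0$ for skew-symmetric $M$ --- and a nonlinear piece $-2\lambda\,\mathrm{Im}\langle u,L_AN\rangle$ that vanishes after integration by parts: using $\mathrm{div}\,A=0$ it reduces to $\frac{1}{p+1}\int A\cdot\nabla|u|^{p+1}=-\frac{1}{p+1}\int(\mathrm{div}\,A)|u|^{p+1}=0$. To make these computations rigorous at the $\mathscr{H}^1$ level, where $Hu$ need not lie in $L^2$, I would run the same local theory in $\mathscr{H}^2$, prove the identities for the smoother solutions, and pass to the limit using continuous dependence on the data.

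The main obstacle is Stage 1 in the weighted, rotating setting: unlike the free equation, $U(t)$ is only locally dispersive (the kernel degenerates when $2\gamma t\in\pi\mathbb{Z}$), and the term $L_A$ couples the $\nabla$ and $x$ directions, so propagating both parts of the $\mathscr{H}^{1,r}$ norm through the flow requires the correct commuting vector fields, or the $\mathcal{R}$-transform, rather than a naive commutator estimate. Once the local theory is secured, the alternative, the lower bound, and the rigorous conservation laws all follow by standard arguments.
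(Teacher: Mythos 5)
Your overall architecture matches the paper's: a local-in-time dispersive estimate for $U(t)=e^{-itH_{A,V}}$, Strichartz estimates in the weighted spaces $\mathscr{H}^{1,r}$ for the admissible pair $(q,r)$, a contraction for the Duhamel map, and then standard arguments for the blowup alternative; your Stage 3 verification of the conservation laws is correct (the paper treats these as known and does not reprove them). The one structural difference in Stage 1 is that you extract dispersion from the explicit Mehler-type kernel \eqref{U(t)OmV}, which the paper mentions only as an alternative route valid for $V=\pm\gamma^2|x|^2$; the paper's primary route is Yajima's oscillatory-integral representation \eqref{ker-U(t)AV}, which covers general sublinear $A$ and subquadratic $V$ and is used to handle the commutation of $x$, $\nabla$ with $U(t-s)$ directly in $\mathscr{H}^{1,r}$ --- exactly the technical point your Stage 1 flags but defers to vector fields or the $\mathcal{R}$-transform.

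There is, however, a genuine gap in Stage 2, in the derivation of the lower bound \eqref{lowbd:T-t}. You pass from $\tau\gtrsim\|u(t)\|_{\mathscr{H}^1}^{-\sigma}$ to $\tau\gtrsim\|\nabla u(t)\|_2^{-\sigma}$ by asserting that ``conserved mass and energy control $\|xu\|_2$ by $\|\nabla u\|_2$ as above.'' But the absorption argument ``as above'' is precisely the one that requires $p<1+\frac{4}{n}$, i.e.\ a sub-quadratic Gagliardo--Nirenberg power. In the regime where \eqref{lowbd:T-t} is relevant, $p\geq 1+\frac{4}{n}$, conservation of mass and energy together with Gagliardo--Nirenberg give only
\begin{align*}
\gamma^2\|xu\|_2^2
=E[u_0]-\|\nabla u\|_2^2-\ell_A[u]+\tfrac{2\lambda}{p+1}\|u\|_{p+1}^{p+1}
\lesssim 1+\|\nabla u\|_2^{\frac{n(p-1)}{2}},
\end{align*}
that is, $\|xu\|_2\lesssim 1+\|\nabla u\|_2^{\frac{n(p-1)}{4}}$ with $\frac{n(p-1)}{4}\geq 1$ (equality only at $p=1+\frac4n$). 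Feeding this superlinear bound into $\tau\gtrsim\|u(t)\|_{\mathscr{H}^1}^{-\sigma}$ yields only $\|\nabla u(t)\|_2\gtrsim (T-t)^{-\frac{4}{n(p-1)}\left(\frac{1}{p-1}-\frac{n-2}{4}\right)}$, which is strictly weaker than \eqref{lowbd:T-t} for every $p>1+\frac4n$. To obtain the stated exponent you need the local existence time to depend only on $\|\nabla u(t)\|_2$ and the conserved mass: for instance, run the contraction so that the smallness condition involves the nonlinearity estimated purely through gradient and Sobolev norms, with the weighted norm $\|xu\|_2$ entering only the radius of the ball and not the time of existence (this is how the ``standard argument as in \cite{BaHuZheng}'' invoked by the paper is structured), or rescale so that the potential and rotation terms become $O(\rho^4)$ and $O(\rho^2)$ perturbations and close them perturbatively.

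A secondary caveat: your absorption of $\ell_A[u]$ in part (a)(i) implicitly requires the rotation to be dominated by the trap, roughly $\|M\|_{\rm op}<2\gamma$, since $\int\left(|\nabla u|^2+V|u|^2+\overline{u}L_Au\right)=\left\|\left(\nabla-\tfrac{i}{2}A\right)u\right\|_2^2+\int\left(\gamma^2|x|^2-\tfrac{|Mx|^2}{4}\right)|u|^2$; without such a balance the quadratic form does not control $\|\nabla u\|_2^2+\|xu\|_2^2$, and the claimed uniform $\mathscr{H}^1$ bound cannot follow from the conservation laws alone.
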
 
\begin{proof} 
Here we briefly outline  the proof. 
In virtue of \cite{Ya}, the kernel representation for $U(t)$ is given by
\begin{align}
 &U(t)f(x)=(2\pi it )^{-n/2}\int e^{iS(t,x,y)}a(t,x,y)f(y)dy\ , \label{ker-U(t)AV}
 \end{align} 
 where $S(t,x,y)$ is  real-valued in $C^\infty(I^*_\delta\times \mathbb{R}^{2n})$, 
 $I^*_\delta:=(-\delta,\delta)\setminus\{0\}$ for some positive constant $\delta$,
 and $a(t,x,y)$ is in $L^\infty\cap C^\infty(\mathbb{R}\times \mathbb{R}^{2n})$.
 Then, from (\ref{ker-U(t)AV}) it follows  the dispersive estimate for $0\ne |t|<\delta$, 
	\begin{align}
	\Vert U(t)f \Vert_{L^\infty} \lesssim \frac{1}{|t|^{n/2}} \Vert f \Vert_{L^1}\ .\label{L1-infty-disp}
	\end{align} 
This, together with $\left\Vert U(t)f\right\Vert_2=\left\Vert f\right\Vert_2$ 
yields the Strichartz type estimates on $I=I^*_\delta$ 
\begin{align} 
&\Vert U(t)f \Vert_{L^q(I, \mathscr{H}^{1,r})}\lesssim \Vert f\Vert_{\mathscr{H}^1}\label{stri-qr-H}\\
&\Vert  \int_0^t U(t-s)F(s,\cdot) ds\Vert_{L^q(I, \mathscr{H}^{1,r})}\lesssim C
 \Vert F \Vert_{L^{\tilde{q}'}(I, \mathscr{H}^{1,\tilde{r}'})}\label{inhom-stri-H}
\end{align} 
where  $(q,r)=(q,r,n)$, $(\tilde{q},\tilde{r})=(\tilde{q},\tilde{r},n)$ are  admissible pairs 
satisfying $q,r\in [2,\infty]$,  $(q,r,n)\neq (2,\infty,2)$ and
\begin{align*} 
\frac{2}{q}+\frac{n}{r}=\frac{n}{2}\ ,
\end{align*} 
with $q'$ denoting the H\"older conjugate of $q$. 
 Hence the local in time existence of (\ref{eq:nls_va}) holds.
If $p\in [1+{4}/{n},1+{4}/{(n-2)})$, the blowup alternative and the lower bound for blowup rate\index{blowup rate}  of (\ref{eq:nls_va}) follow from standard argument as in \cite{BaHuZheng}.
\end{proof}
\begin{remark}
The  Strichartz estimates (\ref{stri-qr-H})-(\ref{inhom-stri-H})  generalize  
 those obtained in  \cite{De91,Zheng} where  $V_e(x)\approx \beta |x|^2$, $\beta>0$ as $|x|\to \infty$.  
 Here we allow $V_e$ to be any quadratic function asymptotically 
 $V_e(x)\approx \sum \beta_{ij} x_ix_j$ with $\beta_{ij}\in \mathbb{R}$. 
In the proof of  (\ref{stri-qr-H})-(\ref{inhom-stri-H}), we directly study the action of $U(t-s)$ in the 
weighted space $\mathscr{H}^{1,r}$ 
based on \cite[Lemma 3.1]{Ya}, an oscillatory integral operator (OIO) formula of  Yajima in the magnetic setting.  The OIO method was initially applied by  Fujiwara in treating electric potentials. 
Our approach allows to technically deal with the commuting issue between $x$, $\nabla$ and $U(t-s)$, 
and provides a treatment for general sublinear $A$ and subquadratic $V$  
assumed in Proposition \ref{prop:wellposedness}. 
Special cases of $A$ and $V$ for the RNLS
were studied in the literature, see e.g., \cite{AnMaSpar,Car11time,CazE88,GaZ13a,HaoHsiaoLi1}. 
\end{remark}

Let $Q=Q_0$ be the ground state\index{ground state}  
solution of (\ref{eq:ground_state_solution}). 
In the $L^2$-critical case $p=1+{4}/{n}$, from \cite{BaHaHuZheng, BaHuZheng} we know that $\Vert Q_0\Vert_2$ is the sharp threshold in the sense that:
	\begin{enumerate}
	\item[(a)] If $\Vert u_0\Vert_2<\Vert Q_0\Vert_2$,
	then equation \eqref{eq:nls_va} has a unique global in time solution.
	\item[(b)] For all $c\geq \Vert Q_0\Vert_2$,
	there exists $u_0$ with $\Vert u_0\Vert_2=c$ so that $u$ is a finite time blowup solution of equation \eqref{eq:nls_va}.
	\end{enumerate} 
According to \cite[Proposition 4.5]{BaHuZheng}, 
if $p=1+4/n$ and
$\Vert u_0\Vert_2=\Vert Q_0\Vert_2$, then all blowup solutions of (\ref{eq:nls_va}) have the pseudo-conformal 
blowup rate 
\begin{align}\label{e:dU-pseudo}
&\Vert \nabla u\Vert_2 =O\left(  (T-t)^{-1}\right), \quad \text{as} \;t\rightarrow T.
\end{align}
The  case  $p>1+4/n$ are technically more challenging. As far as we know, 
 there have not been results on the characterization for the blowup profile or blowup rate. 
Theorem \ref{thm:upper_bound} provides an upper bound for the rotational NLS (\ref{eq:nls_va}) under a harmonic potential.\index{harmonic potential} 
For the standard NLS, such upper bound is sharp, 
which is shown by constructing a ring-blowup solution in  \cite{MerRaSzef}. However, we do not know 
if the estimate (\ref{univ-bluprate}) is sharp for equation (\ref{eq:nls_va}), since the $\mathcal{R}$ transform does not apply for the $L^2$-supercritical case.

\subsection{Radial Gagliardo-Nirenberg inequality}
The following is a radial version of Gagliardo-Nirenberg inequality due to \mbox{W.~A.~Strauss.}

\begin{lemma}\label{lemma:radial_GN}
Let $u\in {H}^1$ be a radial function.  Then for $R>0$,  there holds true
	\begin{align*}
	\Vert u\Vert_{L^\infty(|x|\geq R)}
	\leq C\frac{\Vert\nabla u\Vert_2^{1/2} \Vert u\Vert_2^{1/2}}{R^\frac{n-1}{2}}\,.
	\end{align*}
\end{lemma}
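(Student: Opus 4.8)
The plan is to reduce the inequality to a one–dimensional estimate by radial symmetry and then apply the fundamental theorem of calculus together with Cauchy--Schwarz. Writing $u(x)=v(|x|)$ with $r=|x|$, one has $|\nabla u|=|v'(r)|$, and passing to polar coordinates gives $\Vert u\Vert_2^2=\omega_{n-1}\int_0^\infty |v(r)|^2\,r^{n-1}\,dr$ and $\Vert\nabla u\Vert_2^2=\omega_{n-1}\int_0^\infty |v'(r)|^2\,r^{n-1}\,dr$, where $\omega_{n-1}$ denotes the surface measure of the unit sphere in $\mathbb{R}^n$. The $L^\infty$ norm on $\{|x|\geq R\}$ is simply $\sup_{r\geq R}|v(r)|$, so it suffices to bound $|v(r)|$ pointwise for each $r\geq R$.

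First I would establish the pointwise identity. Since radial $C_c^\infty$ functions are dense in the radial subspace of $H^1$, it is enough to prove the estimate for smooth, compactly supported $v$ and then pass to the limit. For such $v$ one has $v(r)\to 0$ as $r\to\infty$, so the fundamental theorem of calculus yields
\[ |v(r)|^2 = -\int_r^\infty \frac{d}{ds}|v(s)|^2\,ds \leq 2\int_r^\infty |v(s)|\,|v'(s)|\,ds. \]

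Next I would insert the radial weight $s^{n-1}$ and exploit that $s^{-(n-1)}\leq R^{-(n-1)}$ for $s\geq r\geq R$. Together with Cauchy--Schwarz this gives
\[ |v(r)|^2 \leq \frac{2}{R^{n-1}}\int_0^\infty |v(s)|\,|v'(s)|\,s^{n-1}\,ds \leq \frac{2}{R^{n-1}}\Big(\int_0^\infty |v|^2 s^{n-1}\,ds\Big)^{1/2}\Big(\int_0^\infty |v'|^2 s^{n-1}\,ds\Big)^{1/2}. \]
Recognizing the two factors on the right as $\omega_{n-1}^{-1}\Vert u\Vert_2^2$ and $\omega_{n-1}^{-1}\Vert\nabla u\Vert_2^2$ and taking square roots produces $|v(r)|\leq C\,R^{-(n-1)/2}\,\Vert\nabla u\Vert_2^{1/2}\Vert u\Vert_2^{1/2}$ with $C=(2/\omega_{n-1})^{1/2}$; taking the supremum over $r\geq R$ finishes the argument.

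The only genuinely delicate point is justifying the integration step for a general radial $H^1$ function, namely that the boundary contribution at infinity vanishes and the manipulations are legitimate. I would handle this precisely by the density reduction mentioned above: prove the bound for smooth compactly supported radial functions, where everything is classical, and then extend to arbitrary radial $u\in H^1$ using the continuity of the right-hand side in the $H^1$ norm together with almost-everywhere convergence of the approximants. Everything else is routine.
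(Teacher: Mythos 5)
Your proof is correct and is essentially the same argument as the paper's: both reduce to one dimension by radial symmetry and gain the factor $R^{-(n-1)/2}$ from the bound $s^{n-1}\geq R^{n-1}$ on the exterior region $\{s\geq R\}$. The only cosmetic difference is that the paper quotes the one-dimensional Gagliardo--Nirenberg inequality as a black box and applies it to the restriction $v|_{\{r\geq R\}}$, whereas you prove that one-dimensional estimate inline via the fundamental theorem of calculus and Cauchy--Schwarz, and you also make explicit the density/limiting step that the paper leaves implicit.
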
 

To prove Lemma \ref{lemma:radial_GN}, 
first note that when $n=1$, 
the classical Gagliardo-Nirenberg inequality reads $\Vert u\Vert_{\infty}\leq C \Vert u'\Vert_2^{1/2} \Vert u\Vert_2^{1/2}$.
For general dimensions, 
since $u$ is radial,
we denote $u(x)=v(|x|)=v(r)$ and note that 
	\begin{align*}
	\Vert u\Vert_{2}
	\geq  \Vert u\Vert_{L^2(|x|\geq R)}
	=C\left(\int_R^\infty|v(r)|^2 r^{n-1}dr\right)^{1/2}
	\geq CR^\frac{n-1}{2}\left(\int_R^\infty|v(r)|^2dr\right)^{1/2}
	=CR^\frac{n-1}{2}\Vert v_R\Vert_2 \,,
	\end{align*}
where $v_R=v\vert_{\{r\geq R\}}$.
Similarly,
we have $\Vert \nabla u\Vert_2\geq CR^\frac{n-1}{2} \Vert v_R' \Vert_2$. 
Combining these with the above one-dimensional  inequality we obtain
	\begin{align*}
	\left\Vert u\right\Vert_{L^\infty(|x|\geq R)}
	=\Vert v_R\Vert_\infty
	\leq C\Vert v_R' \Vert_2^{1/2} \Vert v_R \Vert_2^{1/2}
	\leq C\frac{ \Vert\nabla u\Vert_2^{1/2} \Vert u\Vert_2^{1/2}}{R^\frac{n-1}{2}}\,.
	\end{align*}

\section{Localized virial identity} 

To prove  Theorem \ref{thm:upper_bound} we derive certain localized virial identity\index{virial identity} associated to equation (\ref{eq:nls_va}). 
 This type of identities were shown in \cite{MerRa} in the case $A=V=0$
 and in \cite{FV09,Gar12} for some general electromagnetic potentials.
Here we present a direct proof for $A=Mx$ ($M$  skew-symmetric) and general $V$, which is different than that in \cite{FV09, Gar12}. 
   Let $C^\infty_0=C^\infty_0(\mathbb{R}^n)$ denote the space of $C^\infty$ functions with compact support.

\begin{lemma}[Localized virial identity]\label{l:local-virial_gamma} 
Assume that $u\in C([0,T),\mathscr{H}^1)$ is a solution to equation \eqref{eq:nls_va}.
Define $\displaystyle J(t):=\int\varphi|u|^2$ for any real-valued radial  function $\varphi\in C^\infty_0$.
Then
	\begin{align}
	J'(t)
	=2\Im\int\overline{u}\nabla\varphi\cdot\nabla u\,, \label{eq:J'}
	\end{align}
and
	\begin{align}\label{eq:J''}
	\begin{split}
	J''(t)
	&=-\int\Delta^2\varphi|u|^2
	-\frac{2\lambda (p-1)}{p+1} \int\Delta\varphi|u|^{p+1}
	+4\int\left(\frac{\varphi''}{r^2}-\frac{\varphi'}{r^3}\right)|x\cdot\nabla u|^2 \\
	&\qquad+4\int\frac{\varphi'}{r}|\nabla u|^2
         -2\int \nabla \varphi \cdot\nabla V  |u|^2\,.
	\end{split}
	\end{align}
\end{lemma}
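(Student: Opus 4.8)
The plan is to differentiate $J$ twice in time, substituting the equation \eqref{eq:nls_va} at each stage and integrating by parts, while organizing the output according to the four pieces of the Hamiltonian $H_{A,V}=-\Delta+V+iA\cdot\nabla$. For \eqref{eq:J'} I differentiate $J(t)=\int\varphi|u|^2$ to get $J'(t)=2\int\varphi\,\Re(\overline u\,u_t)$ and substitute $u_t=i(\Delta u-Vu+\lambda|u|^{p-1}u-L_Au)$. The potential and nonlinear contributions drop because $\Re\big(i\,\overline u\,(\text{real multiple of }u)\big)=0$. The Laplacian term gives $2\int\varphi\,\Re(i\overline u\,\Delta u)=-2\int\varphi\,\Im(\overline u\,\Delta u)=-2\,\Im\!\int\varphi\,\nabla\cdot(\overline u\,\nabla u)$, and a single integration by parts yields $2\,\Im\!\int\overline u\,\nabla\varphi\cdot\nabla u$. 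The angular-momentum term contributes $2\int\varphi\,\Re(\overline u\,A\cdot\nabla u)=\int\varphi\,A\cdot\nabla|u|^2$; integrating by parts turns this into $-\int\mathrm{div}(\varphi A)\,|u|^2$, which vanishes because $\mathrm{div}\,A=\mathrm{tr}\,M=0$ and, $\varphi$ being radial, $A\cdot\nabla\varphi=\tfrac{\varphi'(r)}{r}(Mx)\cdot x=0$ by skew-symmetry of $M$. This last cancellation is the seed of the whole computation.

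For \eqref{eq:J''} I differentiate $J'(t)=2\,\Im\!\int\overline u\,\nabla\varphi\cdot\nabla u$ once more, substitute $u_t$ and $\overline u_t$, integrate by parts, and again sort by source term. The kinetic piece reproduces the classical local virial expression $4\sum_{j,k}\int\partial_{jk}\varphi\,\Re(\partial_j u\,\partial_k\overline u)-\int\Delta^2\varphi\,|u|^2$; substituting the radial Hessian $\partial_{jk}\varphi=(\tfrac{\varphi''}{r^2}-\tfrac{\varphi'}{r^3})x_jx_k+\tfrac{\varphi'}{r}\delta_{jk}$ converts the Hessian sum into $4\int(\tfrac{\varphi''}{r^2}-\tfrac{\varphi'}{r^3})|x\cdot\nabla u|^2+4\int\tfrac{\varphi'}{r}|\nabla u|^2$. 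The nonlinear pieces of $u_t,\overline u_t$ combine, via $|u|^{p-1}\Re(\overline u\,\nabla u)=\tfrac1{p+1}\nabla|u|^{p+1}$ and one integration by parts, into $-\tfrac{2\lambda(p-1)}{p+1}\int\Delta\varphi\,|u|^{p+1}$, while the potential pieces give $-2\int\nabla\varphi\cdot\nabla V\,|u|^2$.

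The step I expect to be the main obstacle is verifying that every angular-momentum contribution to $J''$ cancels, so that no magnetic term appears in \eqref{eq:J''}. The cleanest bookkeeping is through the double-commutator identity $J''=-\langle u,[H_{A,V},[H_{A,V},\varphi]]u\rangle$ together with the nonlinear correction, whose non-magnetic parts simply reproduce the kinetic, potential and nonlinear terms found above. Since $A\cdot\nabla\varphi\equiv0$ for radial $\varphi$, the inner commutator collapses to the purely differential operator $[H_{A,V},\varphi]=-\Delta\varphi-2\nabla\varphi\cdot\nabla$, which carries no magnetic part; the magnetic content of the outer commutator then reduces to the Lie bracket $[A\cdot\nabla,\,\nabla\varphi\cdot\nabla]$, whose $k$-th coefficient equals $(A\cdot\nabla)\partial_k\varphi-(M\nabla\varphi)_k$. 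Using $(Mx)\cdot x=0$ one computes $(A\cdot\nabla)\partial_k\varphi=\tfrac{\varphi'}{r}A_k=(M\nabla\varphi)_k$, so this bracket vanishes identically and all magnetic terms drop out. In a purely direct calculation the same fact surfaces as an exact cancellation between the cross terms coupling $-\Delta$ with $A\cdot\nabla$ and the self-interaction of $A\cdot\nabla$; either way, it is the radiality of $\varphi$ together with the skew-symmetry of $M$ that forces the cancellation, and verifying this cleanly is the one genuinely new ingredient beyond the $A=0$ case.
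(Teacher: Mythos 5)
Your proposal is correct, and for the second-derivative identity it takes a genuinely different route from the paper. The paper's proof of \eqref{eq:J''} is a fully direct computation: it writes $J''=2(-S-2T)$ with $S=\Im\int\Delta\varphi\,\overline{u}u_t$ and $T=\Im\int\nabla\varphi\cdot\nabla\overline{u}\,u_t$, substitutes the equation into each, and evaluates all the resulting pieces by integration by parts; there the magnetic contributions $S_4=-i\int\Delta\varphi\,\overline{u}\,A\cdot\nabla u$ and $T_4=\tfrac{i}{2}\int\overline{u}\,\Delta\varphi\,A\cdot\nabla u$ are individually \emph{nonzero} and only cancel in the final combination $-S_4-2T_4=0$. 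You instead run the Heisenberg double-commutator bookkeeping $J''=-\langle u,[H_{A,V},[H_{A,V},\varphi]]u\rangle+(\text{nonlinear correction})$ and kill the magnetic terms at the operator level: $[iA\cdot\nabla,\varphi]=i(A\cdot\nabla\varphi)=0$, and $[iA\cdot\nabla,\,-\Delta\varphi-2\nabla\varphi\cdot\nabla]=0$ by your computation $(A\cdot\nabla)\partial_k\varphi=\tfrac{\varphi'}{r}A_k=(M\nabla\varphi)_k$ (both identities resting on $x\cdot Mx=0$, i.e.\ radiality of $\varphi$ plus skew-symmetry of $M$, and the fact that $A\cdot\nabla$ annihilates radial functions such as $\Delta\varphi$). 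The two mechanisms are consistent --- your operator identities say the total magnetic contribution vanishes, while the paper's decomposition distributes it between two integrals that cancel --- but yours makes the cancellation structural rather than an apparent coincidence of bookkeeping, and it explains \emph{why} no magnetic term can survive for radial weights; it also generalizes immediately to other radial multipliers. What the paper's direct route buys is self-containedness at the stated regularity: every step is an explicit integration by parts on integrals meaningful for $u\in C([0,T),\mathscr{H}^1)$, with no need to set up the commutator formalism or to isolate the nonlinear correction as a separate object. If you write your version up in full, do carry out that correction explicitly, namely $\langle u,\Gamma N\rangle-\langle N,\Gamma u\rangle$ with $\Gamma=-\Delta\varphi-2\nabla\varphi\cdot\nabla$ and $N=-\lambda|u|^{p-1}u$, which via $|u|^2\nabla(|u|^{p-1})=\tfrac{p-1}{p+1}\nabla|u|^{p+1}$ yields exactly $-\tfrac{2\lambda(p-1)}{p+1}\int\Delta\varphi\,|u|^{p+1}$; that is the one step your outline only sketches.
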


\begin{proof}
Note that
	\begin{align*}
	J'(t)
	&=\int\varphi u_t\overline{u}
	+\int\varphi u\overline{u}_t
	=2\Re\int\varphi\overline{u}u_t \\
	&=2\Re\int\varphi\overline{u}(i\Delta u-iVu+i\lambda|u|^{p-1}u+A\cdot\nabla u)
	:=2(I_1+I_2+I_3+I_4).
	\end{align*}
The term $I_1$ is estimated as
	\begin{align*}
	I_1
	=\Re\left(i\int\varphi\overline{u}\Delta u\right)
	=\Re\left(-i\int\varphi|\nabla u|^2-i\int\overline{u}\nabla\varphi\cdot\nabla u\right)
	=\Im\int\overline{u}\nabla\varphi\cdot\nabla u.
	\end{align*}
Obviously
$\displaystyle I_2=\Re\left(-i\int\varphi V|u|^2\right)=0$ and
$\displaystyle I_3=\Re\left(i\lambda\int\varphi|u|^{p+1}\right)=0$.
For $I_4$,
we have
	\begin{align*}
	I_4
	&=\Re\int\varphi\overline{u}A\cdot\nabla u
	=\Re\left(-\int\nabla\varphi\cdot A|u|^2
	-\int\varphi\nabla\overline{u}\cdot Au
	-\int\varphi (\nabla\cdot A) |u|^2\right) \\
	&=-\int\nabla\varphi\cdot A|u|^2
	-I_4
	-\int\varphi\nabla\cdot A|u|^2.
	\end{align*}
Since $\varphi$ is radial and $M$ is skew-symmetric, we know that (with $r=|x|$)
	\begin{align}\label{eq:radial_asymmetric_estimates}
	\nabla\varphi\cdot A=\varphi'(r)\frac{x}{r}\cdot A=\frac{\varphi'(r)}{r}x\cdot(Mx)=0 \qquad
	{\rm and} \qquad
	\nabla\cdot A=0.
	\end{align}
So $I_4=-I_4$ and this implies $I_4=0$.
Hence \eqref{eq:J'} follows.

Differentiating \eqref{eq:J'} again,
we have 
	\begin{align*}
	J''(t)
	&=2\left(\Im\int\overline{u}_t\nabla\varphi\cdot\nabla u
	+\Im\int\overline{u}\nabla\varphi\cdot\nabla u_t\right)
	=2\left(\Im\int\overline{u}_t\nabla\varphi\cdot\nabla u
	-\Im\int\nabla\cdot(\overline{u}\nabla\varphi)u_t\right) \\
	&=2\left(-\Im\int\Delta\varphi\overline{u}u_t
	-2\Im\int\nabla\varphi\cdot\nabla\overline{u}u_t\right)
	:=2(-S-2T).
	\end{align*}
To estimate $S$,
first we write
	\begin{align*}
	S
	=\Im\int \Delta\varphi \,\overline{u} u_t
	=\Im\int\Delta\varphi \,\overline{u}(i\Delta u-iVu+i\lambda|u|^{p-1}u+A\cdot\nabla u)
	:=S_1+S_2+S_3+S_4.
	\end{align*}
Since
	\begin{align*}
	S_1
	&=\Im\left(i\int\Delta\varphi\overline{u}\Delta u\right)\\
	&=\int\Delta^2\varphi|u|^2
	+\Im\left(-i\int\Delta\varphi\Delta\overline{u}u\right)
	-2\int\Delta\varphi|\nabla u|^2
	=\int\Delta^2\varphi|u|^2
	-S_1
	-2\int\Delta\varphi|\nabla u|^2,
	\end{align*}
we have $\displaystyle S_1=\frac{1}{2}\int\Delta^2\varphi|u|^2-\int\Delta\varphi|\nabla u|^2$.
Obviously,
$\displaystyle S_2=-\int\Delta\varphi V|u|^2$
and $\displaystyle S_3=\lambda\int\Delta\varphi|u|^{p+1}$. 
In $S_4$,
since $\Delta\varphi$ is also radial,
by \eqref{eq:radial_asymmetric_estimates} we note that
	\begin{align}\label{eq:pure_imaginary}
	\begin{split}
	\int\Delta\varphi\, \overline{u}A\cdot\nabla u
	&=-\int\nabla (\Delta\varphi) \cdot A|u|^2
	-\int\Delta\varphi\nabla\overline{u}\cdot Au
	-\int\Delta\varphi (\nabla\cdot A ) |u|^2 \\
	&=-\overline{\int\Delta\varphi \,\overline{u}A\cdot\nabla u}\,,
	\end{split}
	\end{align}
indicating $\displaystyle \int\Delta\varphi \,\overline{u}A\cdot\nabla u$ is  imaginary.
So $\displaystyle S_4=\Im\int\Delta\varphi\overline{u}A\cdot\nabla u=-i\int\Delta\varphi\overline{u}A\cdot\nabla u$.

To estimate $T$,
first we write
	\begin{align*}
	T
	=\Im\int\nabla\varphi\cdot\nabla\overline{u}(i\Delta u-iVu+i\lambda|u|^{p-1}u+A\cdot\nabla u)
	:=T_1+T_2+T_3+T_4.
	\end{align*}
For $T_1$,
one has
	\begin{align*}
	T_1
	&=\Im\left(i\int\sum_{j,k}\varphi_{x_j}\overline{u}_{x_j}u_{x_kx_k}\right)
	=\Im\left(-i\int\sum_{j,k}\varphi_{x_jx_k}\overline{u}_{x_j}u_{x_k}
	-i\int\sum_{j,k}\varphi_{x_j}\overline{u}_{x_jx_k}u_{x_k}\right)
	:=T_{1,1}+T_{1,2}.
	\end{align*}
Since $\varphi$ is radial,
we have $\displaystyle \varphi_{x_j}=\varphi'(r)\frac{x_j}{r}$ and
$\displaystyle \varphi_{x_jx_k}=\varphi''(r)\frac{x_kx_j}{r^2}+\varphi'(r)\frac{\delta_{jk}}{r}-\varphi'(r)\frac{x_jx_k}{r^3}$,
so
	\begin{align*}
	T_{1,1}
	=-\int\frac{\varphi''}{r^2}|x\cdot\nabla u|^2
	-\int\frac{\varphi'}{r}|\nabla u|^2
	+\int\frac{\varphi'}{r^3}|x\cdot\nabla u|^2.
	\end{align*}
Also,
	\begin{align*}
	T_{1,2}
	=\Im\left(i\int\Delta\varphi|\nabla u|^2
	+i\int\sum_{j,k}\varphi_{x_j}\overline{u}_{x_k}u_{x_kx_j}\right)
	=\int\Delta\varphi|\nabla u|^2
	-T_{1,2}\,,
	\end{align*}
which reveals $\displaystyle T_{1,2}=\frac{1}{2}\int\Delta\varphi|\nabla u|^2$.
For $T_2$,
one has
	\begin{align*}
	T_2
	&=\Im\left(-i\int\nabla\varphi\cdot\nabla\overline{u}Vu\right)
	=\int\Delta\varphi V|u|^2
	+\int\nabla\varphi\cdot\nabla V|u|^2
	-T_2\,,
	\end{align*}
so $\displaystyle T_2=\frac{1}{2}\int\Delta\varphi V|u|^2+\frac12\int \nabla\varphi \cdot \nabla V |u|^2$. 
For $T_3$, there is
	\begin{align*}
	T_3
	&=\Im\left(i\lambda\int\nabla\varphi\cdot\nabla\overline{u}|u|^{p-1}u\right)
	=-\lambda\int\Delta\varphi|u|^{p+1} -pT_3\,,  
	\end{align*}
and so $\displaystyle T_3=-\frac{\lambda}{p+1}\int\Delta\varphi|u|^{p+1}$. 
For $T_4$,
we have
	\begin{align*}
	T_4
	=\Im\int (\nabla\varphi\cdot\nabla\overline{u}) \ (A\cdot\nabla u)
	=\Im\left(-\int\overline{u}\Delta\varphi A\cdot\nabla u
	-\int\overline{u}\nabla\varphi\cdot\nabla(A\cdot\nabla u)\right)
	:=T_{4,1}+T_{4,2}.
	\end{align*}
By \eqref{eq:pure_imaginary} we obtain $\displaystyle T_{4,1}=i\int\overline{u}\Delta\varphi A\cdot\nabla u$.
Also,
	\begin{align*}
	T_{4,2}
	&=\Im\left(-\int\overline{u}\sum_{j}\varphi_{x_j}\left(\sum_{k,l}M_{k,l}x_lu_{x_k}\right)_{x_j}\right) \\
	&=\Im\left(-\int\overline{u}\sum_{j,k,l}\varphi_{x_j}M_{k,l}\delta_{lj}u_{x_k}
	-\int\overline{u}\sum_{j,k,l}\varphi_{x_j}M_{k,l}x_lu_{x_kx_j}\right) \\
	&=\Im\left(-\int\overline{u}\sum_{j,k}\varphi_{x_j}M_{k,j}u_{x_k}
	+\int\sum_{j,k,l}\overline{u}_{x_k}\varphi_{x_j}M_{k,l}x_lu_{x_j}\right. \\
	&\hspace{.5in}\left.+\int\overline{u}\sum_{j,k,l}\varphi_{x_jx_k}M_{k,l}x_lu_{x_j}
	+\int\overline{u}\sum_{j,k,l}\varphi_{x_j}M_{k,l}\delta_{lk}u_{x_j}\right) \\
	&:=T_{4,2,1}+T_{4,2,2}+T_{4,2,3}+T_{4,2,4}.
	\end{align*}
Obviously $T_{4,2,2}=-T_4$,
and the skew-symmetry of $M$ implies $T_{4,2,4}=0$.
Note that
	\begin{align*}
	T_{4,2,3}
	&=\Im\left(-\int\sum_{j,k,l}\overline{u}_{x_j}\varphi_{x_k}M_{k,l}x_lu_{x_j}
	-\int\overline{u}\sum_{j,k,l}\varphi_{x_k}M_{k,l}\delta_{lj}u_{x_j}
	-\int\overline{u}\sum_{j,k,l}\varphi_{x_k}M_{k,l}x_lu_{x_jx_j}\right) \\
	&=-\Im\int A\cdot\nabla\varphi|\nabla u|^2
	-T_{4,2,1}
	-\Im\int\overline{u}A\cdot\nabla\varphi\Delta u=-T_{4,2,1} \, .
	\end{align*}
Hence $T_{4,2}=-T_4$ and so $\displaystyle T_4=\frac{i}{2}\int\overline{u}\Delta\varphi A\cdot\nabla u$.
Finally we obtain \eqref{eq:J''} by collecting all estimates on $S$'s and $T$'s.
\end{proof}

\section{Proof of the main theorem}\label{section:proof_of_the_main_theorem}

Now  we are ready to prove Theorem \ref{thm:upper_bound}.

\begin{proof} of Theorem \ref{thm:upper_bound}.
For a radial data $u_0$,
let $u$ be a corresponding radial solution that blows up in finite time $T<\infty$.
Then $x\cdot\nabla u=ru'$ and $|\nabla u|=|u'|$,
and the localized virial identity\index{virial identity} \eqref{eq:J''} can be written as, with $V=\gamma^2 |x|^2$
	\begin{align}\label{eq:J''_alternative}
	J''(t)
	&=-\int\Delta^2\varphi|u|^2
	-\frac{2\lambda(p-1)}{p+1}\int\Delta\varphi|u|^{p+1}
	+4\int\varphi''|\nabla u|^2
	-4\gamma^2\int x\cdot\nabla\varphi|u|^2.
	\end{align}
Choose a smooth radial function $\psi$ such that $\psi(x)=\frac{|x|^2}{2}$ if $|x|\leq 2$ and $\psi(x)=0$ if $|x|\geq 3$.
Pick a time $0<\tau<T$ and a radius $0<R=R(\tau)\ll 1$ (to be determined later).
Let $\varphi(x)=R^2\psi(\frac{x}{R})$.
Then, with $r = |x|$
	\begin{align*}
	\nabla\varphi(x)=R\psi'(\frac{r}{R})\frac{x}{r}\,, \quad
	\varphi''(r)=\psi''(\frac{r}{R})\,, \quad
	\Delta\varphi(x)=\Delta\psi(\frac{x}{R})\,, \quad
	\Delta^2\varphi(x)=\frac{1}{R^2}\Delta^2\psi(\frac{x}{R})\,,
	\end{align*}
so
	\begin{align*}
	J''(t)
	&=-\frac{1}{R^2}\int\Delta^2\psi(\frac{x}{R})|u|^2
	-\frac{2(p-1)}{p+1}\lambda\int\Delta\psi(\frac{x}{R})|u|^{p+1} \\
	&\qquad+4\int\psi''(\frac{r}{R})|\nabla u|^2
	-4\gamma^2R\int\psi'(\frac{r}{R})r|u|^2 \\
	&:=J_1+J_2+J_3+J_4.
	\end{align*}
Since $\Delta^2\psi$ is bounded,
and $\Delta^2\psi(\frac{x}{R}) = 0$ if $|x| \leq 2R$ or $|x| \geq 3R$,
we have $\displaystyle J_1\leq\frac{C}{R^2}\int_{2R\leq|x|\leq 3R}|u|^2$.
Also,
since $\Delta\psi$ is bounded,
and $\Delta\psi(\frac{x}{R})=n$ when $|x|\leq 2R$,
there is
	\begin{align*}
	J_2
	&=-\frac{2n\lambda(p-1)}{p+1}\int_{|x|\leq 2R}|u|^{p+1}
	-\frac{2\lambda(p-1)}{p+1}\int_{|x|>2R}\Delta\psi(\frac{x}{R})|u|^{p+1} \\
	&=-\frac{2n\lambda(p-1)}{p+1}\int|u|^{p+1}
	+\frac{2n\lambda(p-1)}{p+1}\int_{|x|>2R}|u|^{p+1}
	-\frac{2\lambda(p-1)}{p+1}\int_{|x|\geq 2R}\Delta\psi(\frac{x}{R})|u|^{p+1} \\
	&\leq-\frac{2n\lambda(p-1)}{p+1}\int|u|^{p+1}
	+C\int_{|x|\geq 2R}|u|^{p+1}.
	\end{align*}
By choosing $\psi$ such that $\psi''\leq 1$,
we have $\displaystyle J_3\leq 4\int|\nabla u|^2$.
And last,
since $\psi'(\frac{r}{R})=\frac{r}{R}$ when $|x|\leq 2R$ and $\psi'(\frac{r}{R})=0$ when $|x|\geq 3R$,
there is
	\begin{align*}
	J_4
	&=-4\gamma^2R\int_{|x|\leq 2R}\frac{r}{R}r|u|^2
	-4\gamma^2R\int_{2R<|x|\leq 3R}\psi'(\frac{r}{R})r|u|^2 \\
	&\leq-4\gamma^2\int_{|x|\leq 2R}|x|^2|u|^2
	+CR\int_{2R<|x|\leq 3R}|x||u|^2
	\leq CR\int_{2R<|x|\leq 3R}|x||u|^2.
	\end{align*}
Collecting all these terms,
we have
	\begin{align*}
	J''(t)
	&\leq 4\int|\nabla u|^2
	-\frac{2n\lambda(p-1)}{p+1}\int|u|^{p+1} \\
	&\qquad+C\left(\frac{1}{R^2}\int_{2R\leq|x|\leq 3R}|u|^2
	+\int_{|x|\geq 2R}|u|^{p+1}
	+R\int_{2R\leq|x|\leq 3R}|x||u|^2\right).
	\end{align*}
Recall that the energy $E[u]$ is conserved,
and $\ell_A[u]=0$ since $u$ is radial and $L_Au=0$,
we obtain
	\begin{align*}
	\int|u|^{p+1}
	=\frac{p+1}{2\lambda}\int|\nabla u|^2
	+\frac{p+1}{2\lambda}\int V|u|^2
	-\frac{p+1}{2\lambda}E[u_0]\,,
	\end{align*}
so
	\begin{align*}
	4\int|\nabla u|^2
	-\frac{2n\lambda(p-1)}{p+1}\int|u|^{p+1}
	=n(p-1)E[u_0]
	-(n(p-1)-4)\int|\nabla u|^2
	-n(p-1)\int V|u|^2.
	\end{align*}
This yields
	\begin{align*}
	J''(t)
	&\leq n(p-1)E[u_0]
	-(n(p-1)-4)\int|\nabla u|^2
	-n(p-1)\int V|u|^2 \\
	&\qquad+C\left(\frac{1}{R^2}\int_{2R\leq|x|\leq 3R}|u|^2
	+\int_{|x|\geq 2R}|u|^{p+1}
	+R\int_{2R\leq|x|\leq 3R}|x||u|^2\right).
	\end{align*}
Since $R\ll 1$,
we know $\frac{1}{R^2}\gg 3R^2$,
so
	\begin{align*}
	(n(p-1)-4)\int|\nabla u|^2
	+J''(t)
	\leq n(p-1)E[u_0]
	+C\left(\frac{1}{R^2}\int_{2R\leq|x|\leq 3R}|u|^2
	+\int_{|x|\geq 2R}|u|^{p+1}\right).
	\end{align*}
Also recall conservation of mass and $\frac{1}{R^2}\gg 1$,
we have
	\begin{align*}
	(n(p-1)-4)\int|\nabla u|^2
	+J''(t)
	&\leq n(p-1)E[u_0]
	+C\left(\frac{1}{R^2}
	+\int_{|x|\geq 2R}|u|^{p+1}\right) \\
	&\leq C\left(\frac{1}{R^2}
	+\int_{|x|\geq 2R}|u|^{p+1}\right)
	\end{align*}
To control the last term in the above inequality,
we apply Lemma \ref{lemma:radial_GN} and again the conservation of mass to obtain
	\begin{align*}
	\int_{|x|\geq 2R}|u|^{p+1}
	&\leq \Vert u\Vert_{L^\infty(|x|\geq 2R)}^{p-1} \int_{|x|\geq 2R} |u|^2 
	\leq C\frac{ \Vert\nabla u \Vert_2^\frac{p-1}{2}}{R^\frac{(n-1)(p-1)}{2}} \\
	&=\left[\left(\delta\frac{4n(p-1)-16}{p-1}\right)^\frac{p-1}{4} \Vert\nabla u \Vert_2^\frac{p-1}{2}\right]
	\frac{C}{R^\frac{(n-1)(p-1)}{2}} \\
	&\leq\delta(n(p-1)-4) \Vert\nabla u \Vert_2^2
	+\frac{C}{R^\frac{2(n-1)(p-1)}{5-p}}\,,
	\end{align*}
where the last inequality is an application of Young's inequality with $\frac{1}{\frac{4}{p-1}}+\frac{1}{\frac{4}{5-p}}=1$.
By choosing $\delta > 0$ small enough,
and noting that $\frac{2(n-1)(p-1)}{5-p}>2$,
we obtain
	\begin{align*}
	\frac{n(p-1)-4}{2} \Vert\nabla u\Vert_2^2
	+ J''(t)
	\leq\frac{C}{R^\frac{2(n-1)(p-1)}{5-p}}\,.
	\end{align*}
Integrate the above inequality over $[\tau,t]$ to obtain
	\begin{align*}
	\frac{n(p-1)-4}{2}\int_\tau^t \Vert\nabla u(s)\Vert_2^2
	+ J'(t)
	\leq\frac{C(t-\tau)}{R^\frac{2(n-1)(p-1)}{5-p}}
	+J'(\tau).
	\end{align*}
Recalling \eqref{eq:J'} and integrating the inequality again with respect to $t$ over $[\tau,t_0]$,
we have
	\begin{align*}
	&\frac{n(p-1)-4}{2}\int_\tau^{t_0}(t_0-s) \Vert\nabla u(s)\Vert_2^2
	+\int\varphi |u(t_0)|^2 \\
	&\leq C\frac{(t_0-\tau)^2}{R^\frac{2(n-1)(p-1)}{5-p}}
	+2(t_0-\tau)\Im\int\overline{u}(\tau)\nabla\varphi\cdot\nabla u(\tau) 
	+\int\varphi|u(\tau)|^2.
	\end{align*}
Recall that $\varphi(x)=R^2\psi(\frac{x}{R})$,
so
	\begin{align*}
	&\int_\tau^{t_0}(t_0-s)\Vert\nabla u(s)\Vert_2^2
	+\int\varphi|u(t_0)|^2 \\
	&\leq C\left(\frac{(t_0-\tau)^2}{R^\frac{2(n-1)(p-1)}{5-p}}
	+R(t_0-\tau)\left|\Im\int\overline{u}(\tau)\nabla\psi(\frac{x}{R})\cdot\nabla u(\tau)\right|
	+R^2\int|u(\tau)|^2\right) \\
	&\leq C\left(\frac{(t_0-\tau)^2}{R^\frac{2(n-1)(p-1)}{5-p}}
	+R(t_0-\tau)\left\Vert\overline{u}(\tau)\nabla\psi(\frac{\cdot}{R})\right\Vert_2\Vert\nabla u(\tau)\Vert_2
	+R^2 \Vert u_0\Vert_2\right) \\
	&\leq C\left(\frac{(t_0-\tau)^2}{R^\frac{2(n-1)(p-1)}{5-p}}
	+R(t_0-\tau)\Vert\nabla u(\tau)\Vert_2
	+R^2\right).
	\end{align*}
Letting $t_0\rightarrow T$ and applying Young's inequality  yield
	\begin{align*}
	\int_{\tau}^T(T-s) \Vert\nabla u(s)\Vert_2^2
	&\leq C\left(\frac{(T-\tau)^2}{R^\frac{2(n-1)(p-1)}{5-p}}
	+R(T-\tau) \Vert\nabla u(\tau)\Vert_2
	+R^2\right) \\
	&\leq C\left(\frac{(T-\tau)^2}{R^\frac{2(n-1)(p-1)}{5-p}}
	+R^2\right)
	+(T-\tau)^2 \Vert\nabla u(\tau)\Vert_2^2\,.
	\end{align*}
By setting $\displaystyle \frac{(T-\tau)^2}{R^\frac{2(n-1)(p-1)}{5-p}}=R^2$,
i.e., choosing $R=(T-\tau)^\frac{5-p}{5-p+(n-1)(p-1)}$,
we have
	\begin{align*}
	\int_\tau^{T}(T-s) \Vert\nabla u(s)\Vert_2^2
	\leq C(T-\tau)^\frac{2(5-p)}{5-p+(n-1)(p-1)}
	+(T-\tau)^2\Vert\nabla u(\tau)\Vert_2^2.
	\end{align*}
To solve this inequality,
let $\displaystyle g(\tau)=\int_{\tau}^{T}(T-s) \Vert\nabla u(s) \Vert_2^2$.
Then the above inequality becomes
	\begin{align*}
	g(\tau)
	\leq C(T-\tau)^\frac{2(5-p)}{5-p+(n-1)(p-1)}
	-(T-\tau)g'(\tau),
	\end{align*}
which is equivalent to
	\begin{align*}
	\frac{d}{d\tau}\left(\frac{g(\tau)}{T-\tau}\right)
	\leq C(T-\tau)^{-\frac{2(n-1)(p-1)}{5-p+(n-1)(p-1)}}.
	\end{align*}
Integrating this with respect to $\tau$ over $[0,t]$ yields $\displaystyle g(t)\leq C(T-t)^\frac{2(5-p)}{5-p+(n-1)(p-1)}$,
the desired result in (\ref{univ-bluprate}).
\end{proof}

\begin{remark}
Theorem \ref{thm:upper_bound} is still valid if we replace the positive constant $\lambda$
in equation \eqref{eq:nls_va} by a $C^1$-function $\lambda(x)$ that satisfies the following three conditions:
	\begin{enumerate}
	\item[(a)] There exist $\lambda_1,\lambda_2>0$ such that $\lambda_1\leq\lambda(x)\leq\lambda_2$;
	\item[(b)] $x\cdot\nabla\lambda\leq 0$;
	\item[(c)] $\nabla\lambda$ is bounded.
	\end{enumerate}
\end{remark} 
The proof proceeds the same way as that given in this section, but requires a version of 
Lemma \ref{l:local-virial_gamma} for the inhomogeneous NLS with rotation. We omit the details here. 

\begin{remark} Assume $T=T_{max}<\infty$ is the blowup time for the solution $u$ of (\ref{eq:nls_va}). 
Then (\ref{univ-bluprate}) implies that
 \begin{align*}
 \displaystyle \liminf_{t\rightarrow T} \;(T-t)^\delta \Vert\nabla u\Vert_2<\infty\, ,
 \end{align*}
where we note that the function $\delta:=\delta(p,n)=\frac{(n-1)(p-1)}{5-p+(n-1)(p-1)} \in \left(\frac{1}{2}, \frac{n-1}{2n-4}\right)$
is increasing in both $p$ and  $n$, given $p\in [1+4/n, 1+{4}/(n-2)) $.
 From (\ref{lowbd:T-t})
 we know that
  for any
  initial data in $\mathscr{H}^1$ one can  derive a general lower bound for the collapse rate, namely,
 there exists  $C=C_{p,n}> 0$ such that
	\begin{align*}
	\Vert\nabla u(t) \Vert_{2}
	\geq C (T - t)^{-(\frac{1}{p-1} - \frac{n-2}{4})}.
	\end{align*} 
In particular, if $p=1+4/n$, the estimate (\ref{univ-bluprate}) is only valid for the lower bound  $(T-t)^{-1/2}$. 
Thus, comparing the mass-critical case, where the $\log$-$\log$ law and pseudo-conformal blowup rate 
(\ref{e:dU-pseudo}) can occur,  the mass-supercritical case for larger data
can be more subtle, see \cite[Theorem 1.1]{BaHuZheng} and \cite{MerRaSzef}. 
\end{remark}

\section{Numerical results for mass-critical and mass-supercritical RNLS in 2D}\label{num-figures}
In this section we show numerical simulations for the blowup of \eqref{eq:nls_va}  with $n=2$ with given initial data $\psi_0$
being a multiple of the ground state\index{ground state} for the following nonlinear Schr\"odinger equation 
\index{nonlinear Schr\"odinger equation}
\begin{align}
    i\psi_t=-\frac{1}{2}\Delta \psi+\frac{1}{2}(\gamma_1^2x^2+\gamma^2_2 y^2)\psi -\lambda |\psi|^{p-1}\psi-i\Omega(y\partial_x-x\partial_y)\psi\, . \label{main2D}
\end{align} 
Let $Q=Q_{\Omega,V}$ be the ground state\index{ground state} for \eqref{main2D} satisfying the associated Euler-Lagrange equation 
\begin{align}\label{Q_OmV^p}
&  \omega Q=-\frac{1}{2}\Delta Q+\frac{1}{2}(\gamma_1^2x^2+\gamma^2_2 y^2)Q -\lambda |Q|^{p-1}Q-i\Omega(y\partial_x-x\partial_y)Q\,,
\end{align}
where $\omega$ is the chemical potential. 
 The construction of the ground states can be found e.g., in \cite{BaHaHuZheng,ELion89} if $p\le 1+4/n$.   
Here we use GPELab  as introduced in \cite{AntDub15} to do  the computations and  
 observe the blowup phenomenon for $\psi_0=C Q_{\Omega,V}$ with appropriate constant $C$
 for $p=3, 4$ and $p=6$. Note that the case $p=6$ is beyond the limit of exponents covered in Theorem \ref{thm:upper_bound}.  
For certain convenience from the software, we compute the solution $\psi$ of equation (\ref{main2D}) rather than  (\ref{eq:nls_va}) on the $(x,y)$-domain $[-3,3]\times [-3,3]$ in the plane. 
There is an obvious scaling relation between $\psi$ and $u$ of these two equations. 
From Subsection 2.1 we know that when $p=3$, the mass of $Q_{0,0}$ is the dichotomy
that distinguishes  the blow-up vs. global existence solutions.  
The main reason we use $Q_{\Omega,V}$ in place of $Q_{0,0}$ is that numerically the actual ground state $Q_{\Omega,V}$ is easier to compute and save as a stable profile under a trapping potential.

\begin{enumerate}
\item Isotropic case: $\gamma_1=\gamma_2=1$, $\lambda = 1$, $\Omega = 0.5$.  
Let $p=3$. We see in Figure \ref{fig:p3-Q(.5;11)}
 that
 the solution has energy concentration in short time and blows up with $\psi_0=2.5 Q_{\Omega,V}$,
 but it shows stable smooth solution at the level $\psi_0=2 Q_{\Omega,V}$.
For $p=4$, we observe in Figure \ref{fig:p4-V(11)} that 
  using $\psi_0= 2Q_{\Omega,V}$   yields a blowup solution;
but  there
 shows no blowup
at $1.6Q_{\Omega,V}$.

\begin{figure}[H]
\centering
\begin{subfigure}[b]{.45\textwidth}
	\includegraphics[width=\textwidth]{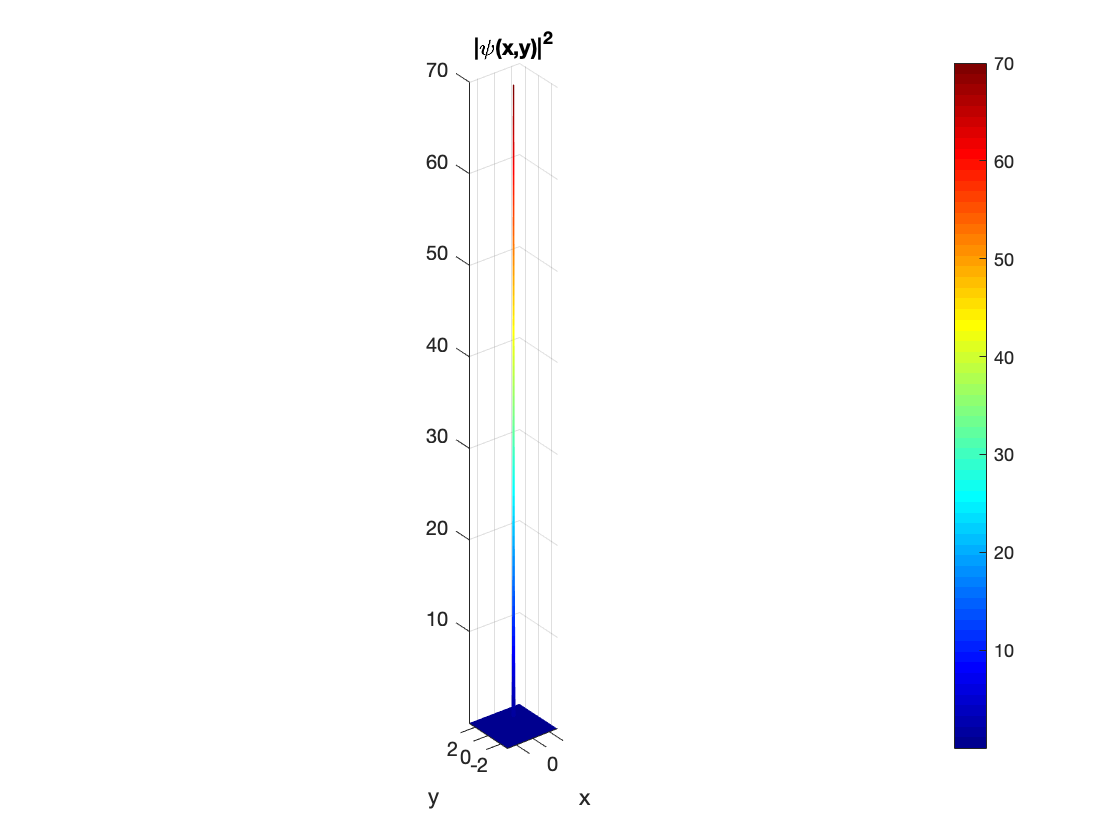}
	\caption{$\psi_0=2.5*Q_{\Omega,V}$ \;
	  (max $|\psi|^2\approx 1812)$}    
\end{subfigure}
\begin{subfigure}[b]{.45\textwidth}
\centering
	\includegraphics[width=\textwidth]{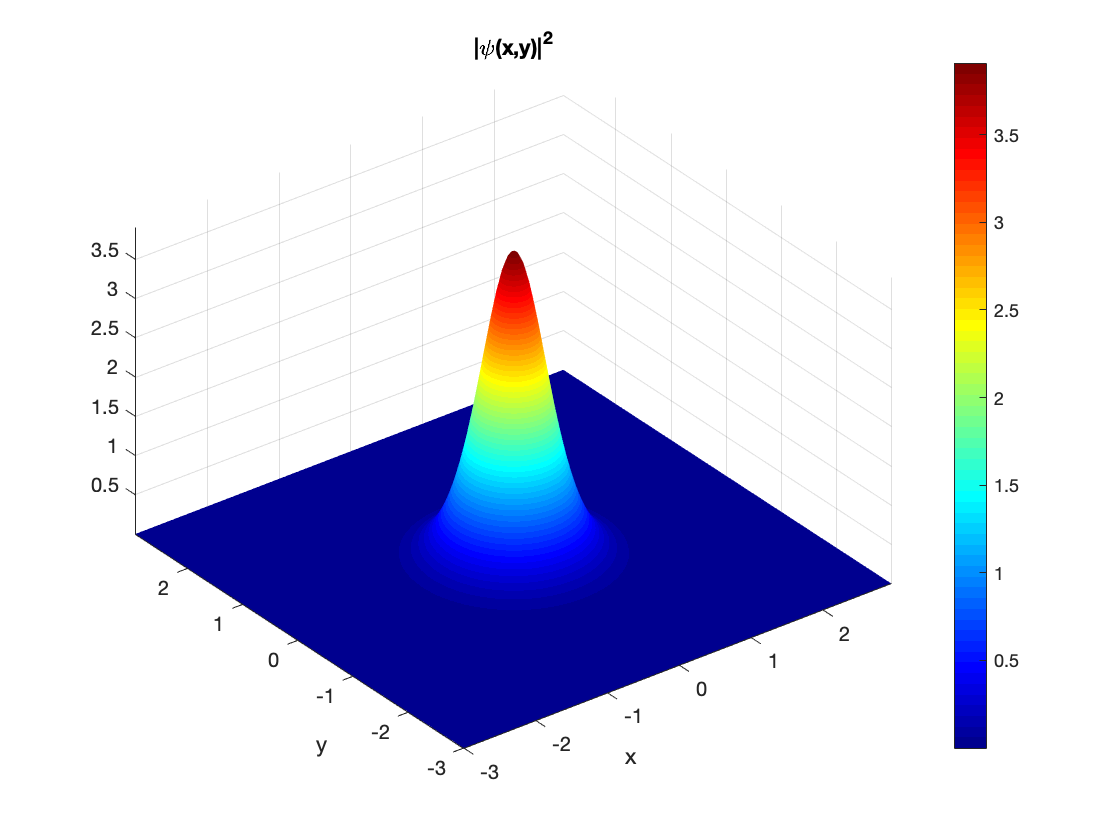}
	\caption{$\psi_0=2*Q_{\Omega,V}$\; (max $|\psi|^2\approx 3.9)$}    
\end{subfigure} 
\caption{$|\psi|^2$ when $p=3$, $(\gamma_1,\gamma_2)=(1,1)$, $\Omega=0.5$}
\label{fig:p3-Q(.5;11)}
\end{figure}

\begin{figure}[H] 
\centering
	\begin{subfigure}[b]{.45\textwidth}
	\centering
	\includegraphics[width=\textwidth]{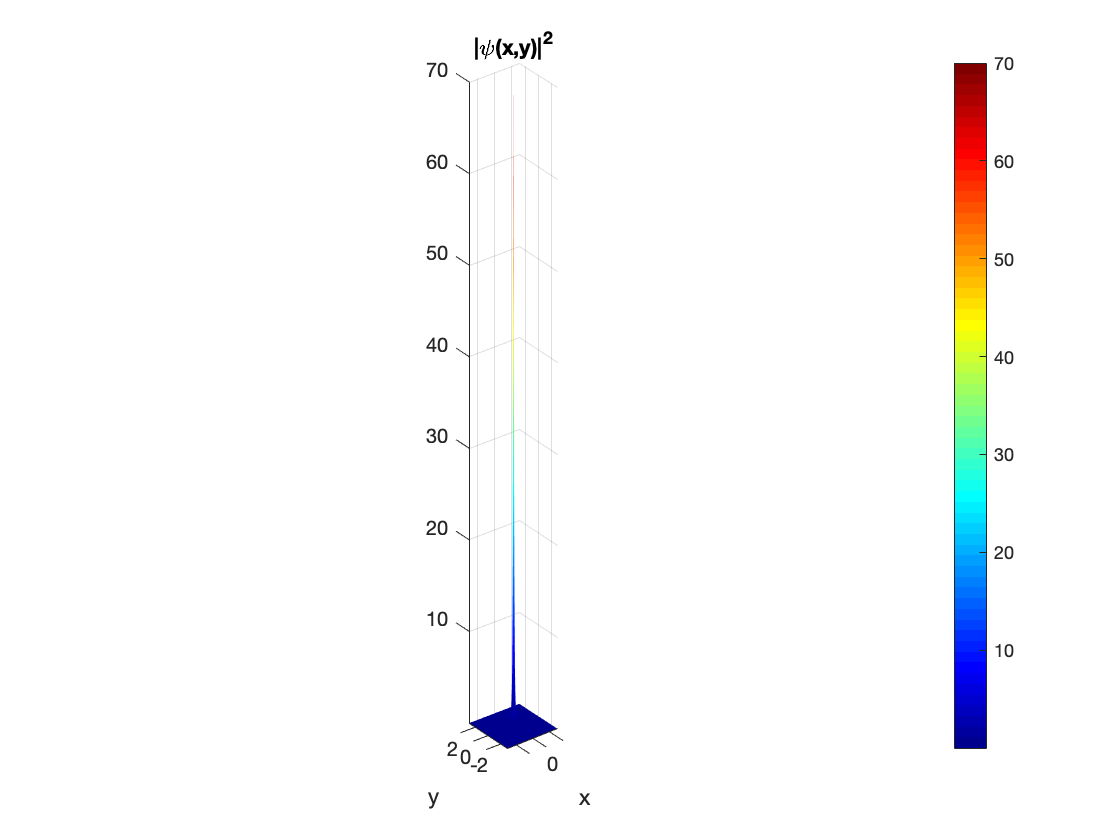}
	\caption{$\psi_0=2*Q_{\Omega,V}$\;
	   (max $|\psi|^2\approx 102)$)}    
\end{subfigure}
\begin{subfigure}[b]{.45\textwidth}
\centering 
	\includegraphics[width=\textwidth]{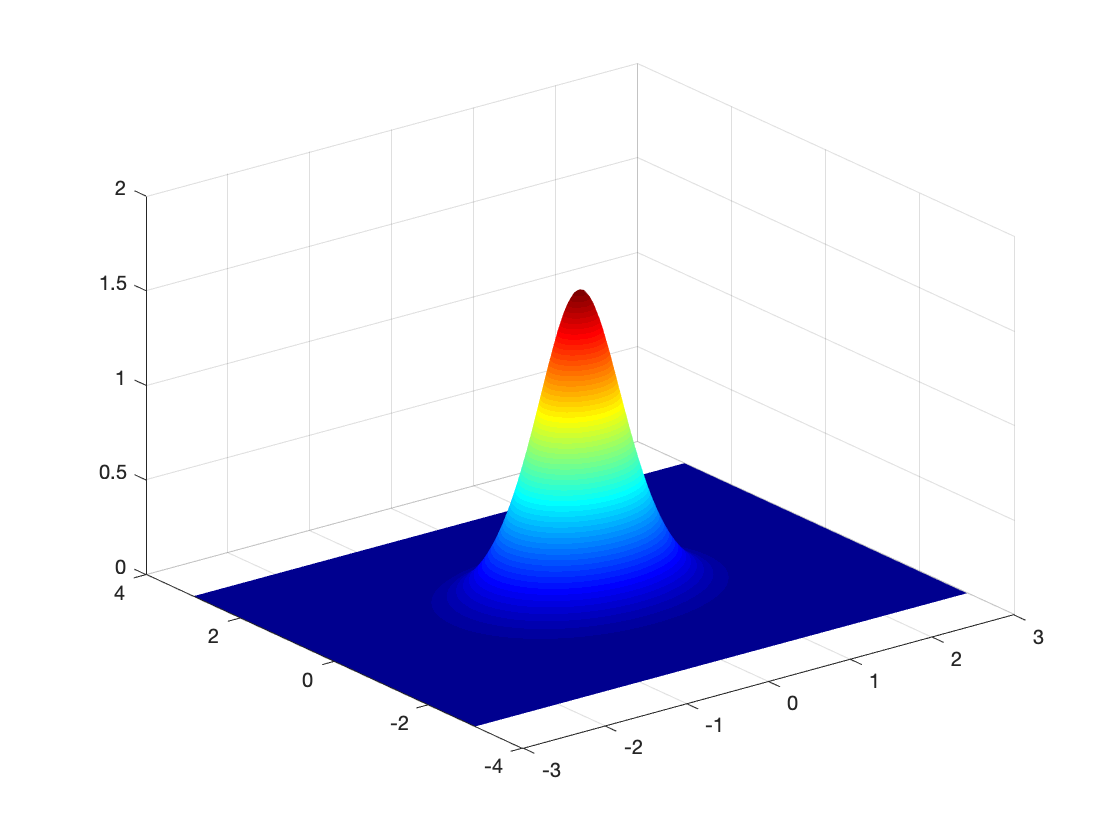} 
	\caption{$\psi_0=1.6*Q_{\Omega,V}$\; (max $|\psi|^2\approx 1.63)$. }    
\end{subfigure}
\caption{$|\psi|^2$ when $p=4$, $(\gamma_1,\gamma_2)=(1,1)$, $\Omega=0.5$}
\label{fig:p4-V(11)}
\end{figure}

\item {Anisotropic case: $\gamma_1=1$, $\gamma_2=2$, $\lambda = 1$, $\Omega = 0.5$.} Let $p=4$.
We observe that the anisotropic harmonic potential\index{harmonic potential}  may yield blowup at a lower level ground state.\index{ground state} 
Figure \ref{fig:p4-V12} shows blowup
when $\psi_0=1.8Q_{\Omega,V}$;
while stable smooth solution at $\psi_0=1.5Q_{\Omega,V}$.

\begin{figure}[H]
\centering
\begin{subfigure}[b]{.45\textwidth}
	\centering
	\includegraphics[width=\textwidth]{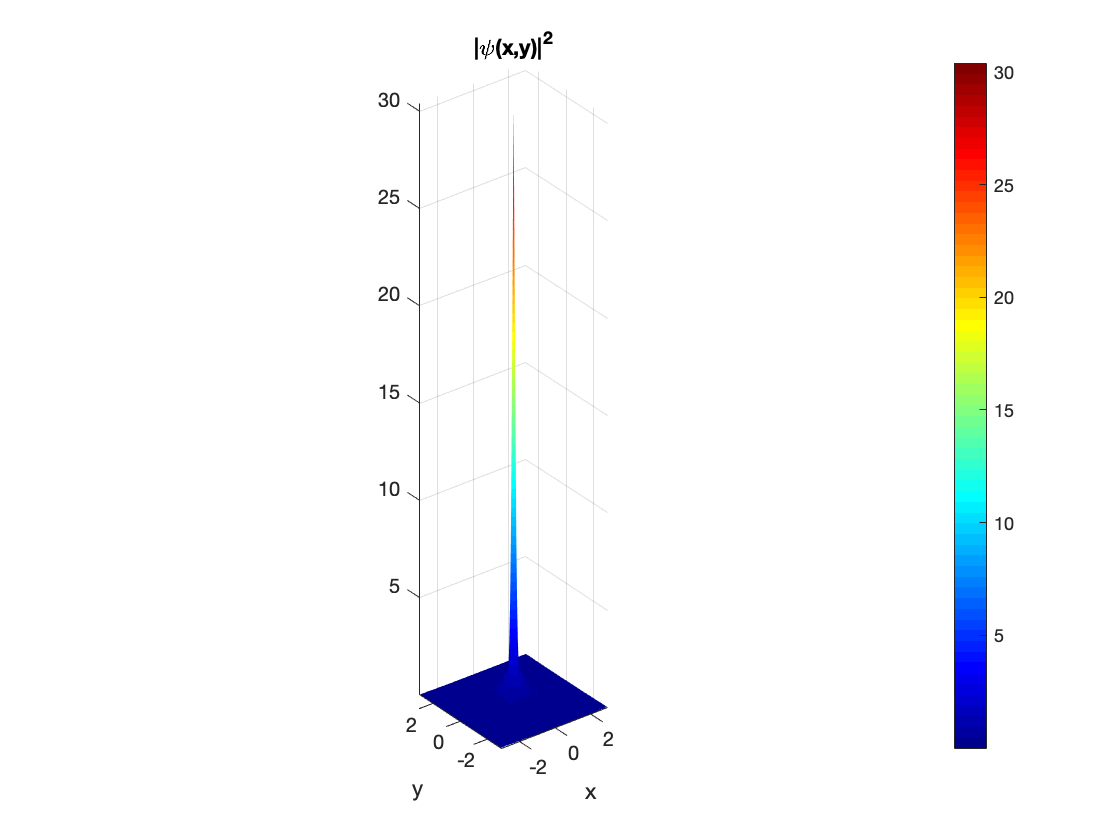}
	\caption{$\psi_0=1.8*Q_{\Omega,V}$\; 
	   (max $|\psi|^2\approx 93)$}    
\end{subfigure} 
\begin{subfigure}[b]{.45\textwidth}
       \centering
	\includegraphics[width=\textwidth]{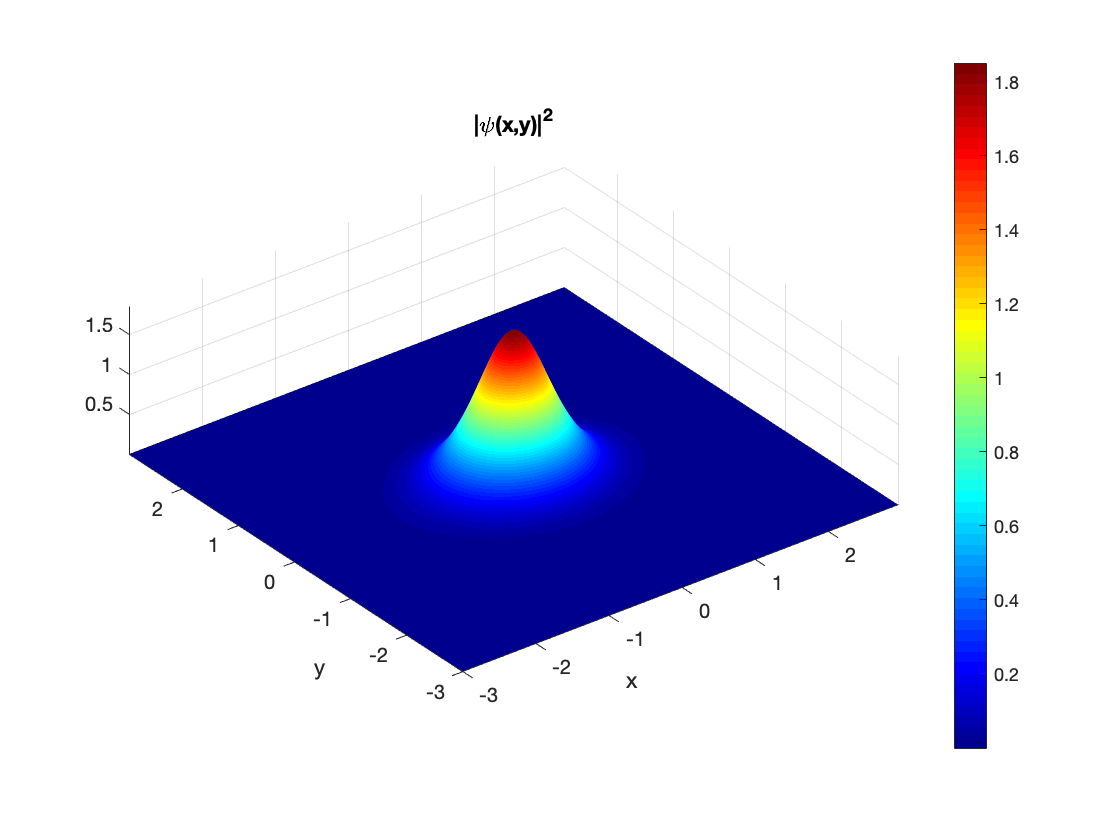}
	\caption{$\psi_0=1.5*Q_{\Omega,V}$\; (max $|\psi|^2\approx 1.85)$}    
\end{subfigure}
\caption{$|\psi|^2$ when $p=4$, $(\gamma_1, \gamma_2)=(1,2)$, $\Omega=0.5$ 
                   } 
                 \label{fig:p4-V12}
\end{figure}

\item  If turning off the rotation, i.e., $\Omega=0$, then Figure \ref{fig:p6-Om(0)V} shows that 
 in the isotropic case $\gamma_1=\gamma_2=1$,  $p=6$,  $\lambda=1$,  
 then blowup threshold $\psi_0=1.565Q_{\Omega,V}$; and there exists a bounded solution in $\mathscr{H}^1$  
   if $\psi_0= 1.56Q_{\Omega,V}$.    
However, in the anisotropic case for $V$, $\gamma_1=1, \gamma_2=2$, the blowup threshold is  
at level $\psi_0=1.395Q_{\Omega,V}$; and there exists a bounded solution in $\mathscr{H}^1$ 
if $\psi_0=1.39Q_{\Omega,V}$. 
The above results reveal that
higher order exponent $p$ and 
anisotropic property for the potential contribute  more to the wave collapse, which may make the 
system unstable at a lower level of mass.  It is of interest to observe that in the presence of rotation 
(Figure \ref{fig:p6-Om(half)V}),
the threshold constants $C$ remain the same in both isotropic and anisotropic cases, 
although $|\psi|^2$, the energies and chemical potentials grow at larger magnitude. 

\begin{figure}[H]
\centering
\begin{subfigure}[b]{.45\textwidth}
	\centering
	\includegraphics[width=\textwidth]{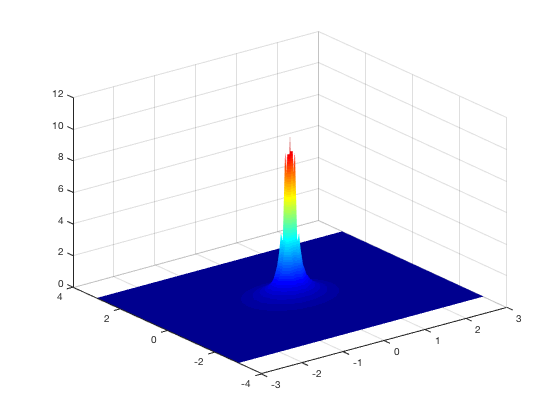}
	\caption{ $(\gamma_1,\gamma_2)=(1,1)$, $\psi_0=1.565*Q_{\Omega,V}$} 
\end{subfigure} 
\begin{subfigure}[b]{.45\textwidth}
       \centering
	\includegraphics[width=\textwidth]{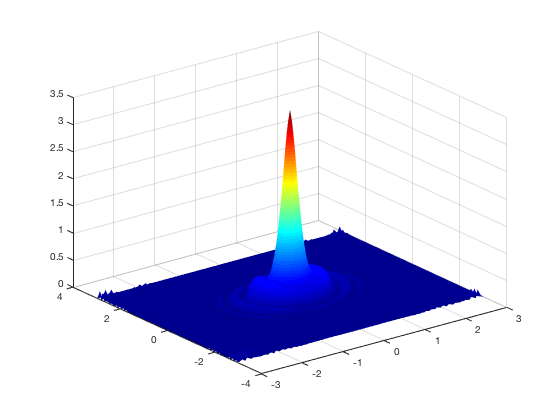}
	\caption{ $(\gamma_1,\gamma_2)=(1,2)$, $\psi_0=1.395*Q_{\Omega,V}$} 
\end{subfigure}
\caption{$|\psi|^2$ when $p=6$,  $V=\frac12(\gamma_1^2x^2+\gamma_2^2y^2)$, $\Omega=0$ 
                   } 
                 \label{fig:p6-Om(0)V}
\end{figure}
Notice that if $p=6$, then the behavior of wave-collapse is quite different than the case $p<5$.
 The modulus square of $\psi(t,x)$ first forms growing singularity. Then it quickly reduces to normal level 
but with large energy and  $\Vert \nabla \psi\Vert_2$ after collapsing time although it does not seem to admit proper self-similar  profile of energy concentration. 

\begin{figure}[H]
\centering
\begin{subfigure}[b]{.45\textwidth}
	\centering
	\includegraphics[width=\textwidth]{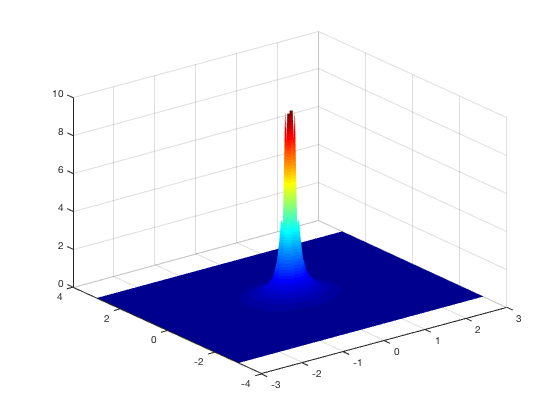}
	\caption{ $(\gamma_1,\gamma_2)=(1,1)$, $\psi_0=1.565*Q_{\Omega,V}$} 
\end{subfigure} 
\begin{subfigure}[b]{.45\textwidth}
       \centering
	\includegraphics[width=\textwidth]{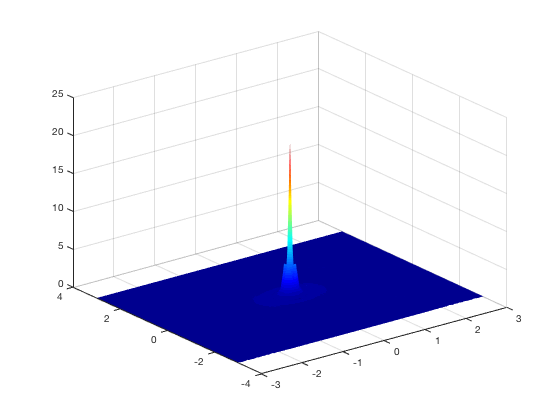}
	\caption{ $(\gamma_1,\gamma_2)=(1,2)$, $\psi_0=1.395*Q_{\Omega,V}$} 
\end{subfigure}
\caption{$|\psi|^2$ when $p=6$,  $V=\frac12(\gamma_1^2x^2+\gamma_2^2y^2)$, $\Omega=0.5$ 
                   } 
                 \label{fig:p6-Om(half)V}
\end{figure}
\end{enumerate}

\noindent{\bf Acknowledgement} 
 The authors thank the anonymous referee for helpful comments that have helped improve the presentation of the article. 

\bigskip


\begin{thebibliography}{99}

\bibitem{Af06}  Aftalion, A., Vortices in Bose-Einstein condensates.
\textit{Progress in Nonlinear Differential Equations and their Applications} \textbf{67}, Birkh\"auser, 2006.


\bibitem{AntDub15}  Antoine, X.,   Duboscq, R., GPELab, a Matlab toolbox to solve Gross-Pitaevskii equations II: Dynamics and stochastic simulations. {\em Computer Physics Communications} {\bf 193} (2015),  95--117.

\bibitem{AnMaSpar}  Antonelli, P., Marahrens, D., Sparber, C.,
On the Cauchy problem for nonlinear Schr\"odinger equations with rotation.
\textit{Discrete Contin. Dyn. Syst.} \textbf{32} (2012), no. 3, 703--715.

\bibitem{BaoCai15}  Bao, W., Cai, Y.,
Ground states and dynamics of spin-orbit-coupled Bose-Einstein condensates.
\textit{SIAM J. Appl. Math.} \textbf{75} (2015), no. 2, 492--517.

\bibitem{BaoWMar05}  Bao, W.,  Wang, H., Markowich, P.,
Ground, symmetric and central vortex states in rotating Bose-Einstein condensates.
\textit{Comm. Math. Sci.} \textbf{3} (2005), 57--88.

\bibitem{BaHaHuZheng}  Basharat, N., Hajaiej, H., Hu, Y., Zheng, S.,
Threshold for blowup and stability for nonlinear Schr\"odinger equation with rotation. Preprint.

\bibitem{BaHuZheng}  Basharat, N., Hu, Y., Zheng,  S., 
Blowup rate for mass critical rotational nonlinear Schr\"odinger equations.  
\textit{Nonlinear Dispersive Waves and Fluids}. {\em Contemp. Math}. {\bf 725} (2019), 1--12. 


\bibitem{Car02a}  Carles, R., Remarks on nonlinear Schr\"odinger equations with harmonic potential. 
{\em Annales Henri Poincar\'e}.  {\bf 3} (2002), no. 4,  757--772.  

\bibitem{Car11time}  Carles, R.,  {Nonlinear Schr\"odinger equation with time dependent potential.} 
\textit{Commun. Math. Sci.} {\bf 9} (2011), no. 4, 937--964.

\bibitem{CazE88}  Cazenave, T., Esteban, M.,
On the stability of stationary states for nonlinear Schr\"odinger equations with an external magnetic field.
\textit{Mat. Apl. Comput.} \textbf{7} (1988), 155--168.


\bibitem{De91}  De Bouard, A., Nonlinear Schr\"odinger equations with magnetic fields. 
\textit{Differential Integral Equations.} \textbf{4} (1991), no. 1, 73--88.

\bibitem{ELion89} 
Esteban, M., Lions, P., Stationary solutions of nonlinear Schr\"odinger equations with an external magnetic field. 
In: 
{\em Partial Differential Equations and the Calculus of Variations}. 
Progress in Nonlinear Differential Equations and Their Applications {\bf 1} (1989),  401--449,   
Birkh\"auser. 


\bibitem{FV09}  Fanelli, L.,   Vega, L., Magnetic virial identities, weak dispersion and Strichartz inequalities.
{\em Math.  Ann}.  {\bf 344} (2009), 249--278. 

\bibitem{GaZ13a}  Galati, L.,  Zheng, S., Nonlinear Schr\"odinger equations for Bose-Einstein 
condensates. {\em Nonlinear and Modern Mathematical Physics}. AIP Conference Proceedings {\bf 1562} (1), 
(2013), 50--64.



\bibitem{Gar12}  Garcia, A., Magnetic virial identities and applications to blow-up for Schr\"odinger and wave equations.
 {\em Journal of Physics}. A, Mathematical and Theoretical 
  {\bf 45} (1),   015202. 


\bibitem{Gross61}  Gross, E.,
Structure of a quantized vortex in boson systems. Nuovo Cimento {\bf 20} (1961), 454.


\bibitem{HaoHsiaoLi1}  Hao, C., Hsiao, L., Li, H.,
Global well posedness for the Gross-Pitaevskii equation with an angular momentum rotational term in three dimensions.
\textit{J. Math. Phys.} \textbf{48} (2007), no. 10, 102105.





\bibitem{MAHHWC99}  Matthews, M., Anderson, B., Haljan, P., Hall, D., Wiemann, C., Cornell, E.,
Vortices in a Bose-Einstein condensates.
\textit{Phys. Rev. Lett.} \textbf{83} (1999), 2498--2501.



\bibitem{MerRa05b}   Merle, F., Rapha\"el, P.,
Profiles and quantization of the blow up mass for critical nonlinear Schr\"odinger equation.
\textit{Comm. Math. Phys.} \textbf{253} (2005), no. 3, 675--704.

\bibitem{MerRa}  Merle, F., Rapha\"el, P., 
Blow up of the critical norm for some radial $L^2$ super critical nonlinear Schr\"odinger equations.
\textit{Amer. J. Math.} \textbf{130} (2008), 945--978.


\bibitem{MerRaSzef}  Merle, F., Rapha\"el, P., Szeftel, J.,
On collapsing ring blow-up solutions to the mass supercritical nonlinear Schr\"odinger equation.
\textit{Duke Math. J.} \textbf{163} (2014), no. 2, 369--431. 



\bibitem{ReZaStri01}  Recati, A., Zambelli, F., Stringari, S.,  Overcritical rotation of a trapped Bose-Einstein condensate.  
 \textit{Phys. Rev. Lett.} \textbf{86} (2001), 377--380.


\bibitem{We83}  Weinstein, M.,  Nonlinear Schr\"odinger equations and sharp interpolation estimates. {\em Comm. Math. Phys}. {\bf 87} (4):567--576, 1983. 

\bibitem{Ya}  Yajima, K.,  Schr\"odinger evolution equations with magnetic fields.
\textit{J. Analyse Math.} \textbf{56} (1991), 29--76.


\bibitem{Zh00}  Zhang, J., Stability of attractive Bose-Einstein condensates. {\em J. Statist. Phys}. {\bf 101}(3-4):731--746, 2000.

\bibitem{Zh05} Zhang, J.,  Sharp threshold for blowup and global existence in nonlinear Schr\"odinger equations under a 
harmonic potential. {\em Comm. Partial Differential Equations} {\bf 30} (2005), no. 10-12, 1429--1443. 




\bibitem{Zheng}  Zheng, S., Fractional regularity for nonlinear Schr\"odinger equations with magnetic fields.
\textit{Contemp. Math.} \textbf{581} (2012), 271--285.

\bibitem{ZhuLi}  Zhu, S.,  Li, X.,
Sharp upper and lower bounds on the blow-up rate for nonlinear Schr\"odinger equation with potential.
\textit{Appl. Math. Comput.} \textbf{190} (2007), no. 2, 1267--1272. 

\bibitem{ZhuZh10}
 Zhu, S.,   Zhang, J., Profiles of blow-up solutions for the Gross-Pitaevskii equation.
\textit{Acta Math. Appl. Sin. Engl. Ser.} \textbf{26} (2010), no. 4, 597--606.
\end{thebibliography}
\end{document}